\newcommand{\hide}[1]{}
\DeclareMathOperator{\Rep}{Rep}
\DeclareMathOperator{\Sem}{Sem}
\DeclareMathOperator{\Ising}{Ising}
\DeclareMathOperator{\Fib}{Fib}
\DeclareMathOperator{\PSO}{PSO}
\DeclareMathOperator{\PSU}{PSU}
\DeclareMathOperator{\Irr}{Irr}
\newcommand{\Irrstar}{\Irr^{\prime}\!}
\DeclareMathOperator{\Hom}{Hom}
\DeclareMathOperator{\FPdim}{FPdim}
\DeclareMathOperator{\so}{\mathfrak{so}}
\DeclareMathOperator{\Trace}{Tr}
\newcommand{\Tr}[1]{\Trace\!\left(#1\right)}
\newcommand{\mcC}{\mathcal{C}}
\newcommand{\mbbC}{\mathbb{C}}
\newcommand{\mfS}{\mathfrak{S}}
\newcommand{\one}{\mathbbm{1}}
\newtheorem{thm}{Theorem}
\newtheorem*{thm*}{Theorem}
\newtheorem{Acor}{Algebraic Corollary}
\newtheorem*{rem}{Remark}
\newtheorem*{rmk}{Remark}
\newtheorem{lem}{Lemma}
\newtheorem{obs}{Observation}
\newcommand{\mc}{\mathcal}
\newcommand{\N}{\mathbb{N}}
\newcommand{\Z}{\mathbb{Z}}
\newcommand{\mbbZ}{\mathbb{Z}}
\newcommand{\mbbN}{\mathbb{N}}
\renewcommand{\t}{\tau}
\newcommand{\paren}[1]{\left(#1\right)}
\renewcommand{\d}{\delta}
\newcommand{\lcb}{\left\{}
\newcommand{\rcb}{\right\}}
\newcommand{\hs}{\hspace{1mm}}
\newcommand{\size}[1]{\left| #1 \right|}
\newcommand{\defn}[1]{\textit{#1}}
\renewcommand{\emptyset}{\varnothing}
\definecolor{CB_purple}{RGB}{51,24,136}
\definecolor{CB_green}{RGB}{17,119,51}
\definecolor{CB_lightblue}{RGB}{136,204,238}
\definecolor{CB_red}{RGB}{136,34,85}
\title{Hypergraph Characterization of Fusion Rings}
\author{Paul Bruillard$^\ast$}
\author{Kathleen Nowak$^\ast$}
\author{Stephen J.\ Young$^\dagger$}
\address{Pacific Northwest National Laboratory, Richland, WA, 99352}
\email{stephen.young@pnnl.gov}
\thanks{$^\ast$Work performed while at Pacific Northwest National Laboratory. $^\dagger$Corresponding author.}
\begin{document}
\newcolumntype{M}{>{\begin{varwidth}{.3\textwidth}}Sc<{\end{varwidth}}}
\begin{abstract}
 We present a correspondence between multiplicity-free, self-dual,
 fusion rings and a digraph, hypergraph pair $(D,H)$.  This
 correspondence is used to provide a complete characterization of all
 fusion rings corresponding to graphical properties of $D$.  Further,
 we exploit this correspondence to provide a complete list of all
 non-isomorphic,  
 self-dual, multiplicity-free fusion rings of rank at most 8.  
\end{abstract}
\maketitle


\section{Introduction}
  Fusion categories arise in a number of settings in both mathematics and
  physics. In physics, they are part of the algebraic data necessary to describe
  a topological quantum field theory \cite{W1}; in computer science, they
  aid in understanding the properties of certain quantum computers
  \cite{Freedman03,NR1}; in mathematics, they axiomatize representation
  theory \cite{ENO1,DGNO1}.
  Mathematically, fusion categories naturally arise from the study of
  representations of finite groups, quantum groups, and certain Hopf algebras
  \cite{ENO1,DGNO1}. Fusion categories are central to many conjectures found in
  the literature such as the Property-F Conjecture \cite{NR1}, Gauging
  Conjecture \cite{ACRW1}, Weakly Group-Theoretical Conjecture \cite{ENO2}, and
  Kaplansky's Sixth Conjecture \cite{AIMPL}.

  Fusion categories are often stratified by their \textit{rank}, the number of
  isomorphism classes of simple objects. However, even for fixed rank fusion
  categories are difficult to fully characterize. At the time this research was initially  presented~\cite{fusion_ring_talk}, a full
  classification was only known through rank 3, though partial results were available up through rank 5 \cite{O2,O3,B1,BO1,L1}. Recently, there has been significant progress, see for instance the thesis of Vercleyen \cite{vercleyen2024lowrankmultiplicityfreefusioncategories} which provides a complete list of all multiplicity-free fusion categories up to rank 7 and was later extended to rank 9~\cite{Vercleyen2022OnLR}, \cite{alekseyev2025classifyingintegralgrothendieckrings} which classifies all modular data of integral modular fusion categories up to rank 13, and \cite{Grothendieck_rank6} which classifies the Grothendieck rings for complex fusion categories of multiplicity one up to rank 6.
  
  Part of the challenge of the underlying
  classification problem is a lack of structure and enumeration tools. One
  partial solution is to characterize the Grothendieck rings of fusion
  categories rather than the categories themselves. This alleviates certain
  burdens associated with categorification and associativity, however the
  problem still remains difficult. In fact, at the time this research~\cite{fusion_ring_talk}, there were no known algorithms to enumerate all candidate rings. Recently, Vercleyen and Slingerland~\cite{Vercleyen2022OnLR} have been able to generate an exhaustive list of fusion rings for several multiplicities up to rank 9 by using clever search heuristics to reduce the search space. While their computational results subsume the computational results described in this work, we believe that the novel correspondence between fusion rings and certain digraph/hypergraph pairs provides independent value. 
  Indeed, as we illustrate in Section \ref{S:undirected}, this correspondence 
  will enable us to apply tools from graph theory and combinatorics, fields
  accustomed to handling mathematical constructs with little structure, to the enumeration problem significantly reducing the enumeration space.  Our
  techniques will allow us to completely characterize certain multiplicity-free
  self-dual fusion rings and are extendable to cases with
  more complex duality structures and rings with multiplicity. Our main result
  is:

  \begin{thm*}
    The multiplicity-free, self-dual braided fusion categories generated by an
    undirected, triangle-free graph $G$ are the Grothendieck equivalent to one
    of the following:
    \begin{enumerate}
      \item $\Fib$
      \item $PSU(3)_2$,
      \item $PSU(2)_6$, and 
      \item $Rep(G)$ where $G$ is an elementary abelian 2-group of order $2^k$. 
    \end{enumerate}
  \end{thm*}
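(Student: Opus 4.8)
The plan is to use the digraph/hypergraph correspondence developed above to turn the hypothesis ``$\mathcal{C}$ is generated by an undirected, triangle-free graph $G$'' into explicit constraints on the fusion coefficients, and then to run a finite case analysis driven by Frobenius--Perron dimension. Write the simple objects as $1 = X_0, X_1, \dots, X_{n-1}$. Self-duality together with the braiding makes the Grothendieck ring commutative, so the coefficients $N_{ijk} := N_{ij}^k$ are totally symmetric in $i,j,k$; set $d_i = \FPdim(X_i) \ge 1$. The vertices of $G$ are the nontrivial objects, with $X_i \sim X_j$ (for $i \neq j$) recording $N_{ii}^j > 0$, equivalently that $X_j$ occurs in $X_i \cdot X_i$; self-loops $N_{ii}^i > 0$ are recorded separately, so that ``triangle-free'' refers to the underlying simple graph.

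First I would record the algebraic content of the two graph hypotheses: that $G$ is undirected is the symmetry $N_{ii}^j > 0 \iff N_{jj}^i > 0$ furnished by the correspondence, and that $G$ is triangle-free forbids three distinct nontrivial objects that are pairwise adjacent. The key structural lemma I would establish is that these, combined with associativity --- equivalently, that the fusion matrices commute and satisfy $N_i N_j = \sum_k N_{ij}^k N_k$ --- force each self-product $X_i \cdot X_i$ into one of a short list of shapes and confine each cross-product $X_i \cdot X_j$ to a correspondingly small support. Writing $A$ for the adjacency matrix of $G$ and $\mathbf{d}$ for the vector of dimensions, the self-products give the identities $d_i^2 = 1 + (A\mathbf{d})_i + \ell_i d_i$, where $\ell_i \in \{0,1\}$ marks a loop; together with the triangle-free spectral bound on the largest eigenvalue of $A$, these restrict the $d_i$ to a short list of algebraic numbers and cap the rank $n$.

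With the structural lemma in hand I would classify by the shape of $G$. If $G$ is edgeless then every $d_i = 1$, so $\mathcal{C}$ is pointed; self-duality forces every nonunit object to have order $2$, and commutativity then identifies the fusion group as an elementary abelian $2$-group, yielding $\Rep((\Z/2)^k)$. A single vertex carrying a loop gives $X^2 = 1 + X$, hence $\Fib$. In the remaining case $G$ has at least one edge and more than one vertex, and I would show the triangle-free spectral bound together with the dimension identities leaves exactly two admissible fusion patterns, which I then match against the known fusion rules of $\PSU(3)_2$ and $\PSU(2)_6$, finally verifying that each surviving candidate ring is associative and is realized by the stated category.

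The hard part is this last case: proving that triangle-freeness plus associativity admits exactly those two sporadic rings and no further finite or infinite family. The risk is a triangle-free graph of larger girth or higher degree that survives the pairwise dimension test yet is obstructed only at the level of associativity; excluding these requires pushing the commuting relations $N_i N_j = \sum_k N_{ij}^k N_k$ far enough to derive a contradiction, while using triangle-freeness to bound the spectral radius of $G$ tightly enough to cap both the rank and the attainable dimensions. Showing that this bound actually terminates the search, rather than leaving a still-infinite family, is where I expect the real difficulty to lie.
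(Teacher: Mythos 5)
Your outline correctly isolates the easy cases (edgeless $\Rightarrow$ pointed $\Rightarrow$ $\Rep$ of an elementary abelian $2$-group, which is in fact a cleaner route to that case than the paper's Steiner-triple-system/commuting-perfect-matchings argument; a single looped vertex $\Rightarrow$ $\Fib$), but the mechanism you propose for the remaining case does not work, and that is where all the content of the theorem lives. The identity $d_i^2 = 1 + (A\mathbf{d})_i + \ell_i d_i$ together with a spectral bound for triangle-free graphs cannot cap the rank or the dimensions: triangle-free graphs of unbounded spectral radius exist in abundance (complete bipartite graphs, incidence graphs of projective planes, etc.), and for such graphs the dimension identities admit consistent positive solutions. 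More fundamentally, your main inequality is blind to the hypergraph half of the data --- the coefficients $N_{ij}^k$ with $i,j,k$ distinct --- which is exactly where the obstruction sits. The paper's proof runs entirely through entrywise commutativity of the fusion matrices: the relation $w_{ij} = 1 + \sum_k e_{ik}e_{jk} - 2e_{ij}$ tying the number of hyperedges on $\{i,j\}$ to the common neighbourhood of $i$ and $j$ in $G$, plus a sequence of loop-forcing lemmas (adjacent loopless vertices must share a neighbour; looped vertices are at distance at most $2$; an edge not in a triangle forces loops on both endpoints), culminating in a direct argument that the unique nontrivial triangle-free component has at most two vertices. None of this is recoverable from Frobenius--Perron data alone.

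You flag the risk yourself --- ``a triangle-free graph \ldots that survives the pairwise dimension test yet is obstructed only at the level of associativity'' --- but flagging the gap is not the same as closing it, and the obstruction is not sporadic: it is the generic situation. To repair the argument you would need to replace the spectral step with explicit comparisons of the form $(N_iN_j)_{ab} = (N_iN_j)_{ba}$ (equivalently, to exploit the $w_{ij}$/$e_{ij}$ relation), at which point you would essentially be reproducing the paper's combinatorial case analysis rather than shortcutting it. A secondary, smaller omission: the $\PSU(2)_6$ case corresponds to a \emph{disconnected} graph (an edge with two loops together with an isolated loopless vertex), so your case split by ``shape of $G$'' must also control how loopless isolated vertices can coexist with a looped component, which again requires the hyperedge bookkeeping rather than dimension counting.
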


  The organization of the paper is as follows. In the next section, we introduce
  the notation that will be used throughout and recall the necessary definitions
  for algebraic topology and structural graph theory. Then in Section
   \ref{S:correspond} we introduce a correspondence between fusion rings and
  certain digraph/hypergraph pairs. From there we
  begin to classify the subclasses of graphs which, by this correspondence,
  yield fusion rings. Our main contribution is presented in
  Section \ref{S:undirected} where we provide a classification of braided fusion
  categories up to Grothendieck equivalence arising from triangle free
  undirected graphs.  In Section \ref{S:lowrank} we illustrate how the
  correspondence developed in this work can be used to facilitate the
  complete enumeration of fusion rings, in particular, we give a
  complete listing of all multiplicity-free, self-dual fusion rings up
  to rank 8.

\section{Preliminaries}
  \subsection{Fusion Rings}   Although our focus in this paper is the
classification and enumeration of fusion rings, this study is
motivated by the connections of fusion categories to both physics and
mathematics.  Thus, we briefly outline here the definitions associated
with fusion categories and refer the interested reader to the
excellent monographs \cite{ENO1,DGNO1} and references therein.

    A \defn{fusion category} over $\mbbC$, $\mcC$, is a $\mbbC$-linear
    semisimple rigid monoidal category with finitely many simple objects and
    finite dimensional spaces of morphisms \cite{O2}. The fusion
    category is said to be \defn{braided} if the monoidal product, $\otimes$, is
    commutative up to isomorphism with that isomorphism subject to certain
    consistency conditions \cite{Bki}. We will
    denote the set of isomorphism classes of simple objects in $\mcC$ by
    $\Irr\paren{\mcC}$ and the isomorphism classes of simple objects themselves
    by $X_{a}$. These simple objects will always be indexed from $0$ and ordered such
    that $X_{0}=\one$ is the monoidal unit.  For convenience of notation we will denote by $\Irrstar\paren{\mcC}$ as the non-identity elements of $\Irr\paren{\mcC}$, that is $\Irrstar\paren{\mcC} = \Irr\paren{C}\backslash\one$. The dual of an object, $X$, is
    denoted by $X^{*}$ and the category is said to be \defn{self-dual} if
    $X_{a}=X_{a}^{*}$ for all $X_{a}\in\Irr\paren{\mcC}$. Duality induces an
    involution on $\Irr\paren{\mcC}$ as well as the index set of $\Irr\paren{\mcC}$, specifically $a^{*}$
    is defined by the relationship $X_{a^{*}}=\paren{X_{a}}^{*}$. The number of isomorphism
    classes of simple objects is referred to as the \defn{rank} of $\mcC$. The
    dimensions of the hom-spaces define the \defn{fusion matrices}, $N_{a}$,
    by
    $\paren{N_{a}}_{b,c}=N_{a,b}^{c}=\dim_{\mbbC}\Hom_{\mcC}\paren{X_{a}\otimes
    X_{b},X_{c}}$. The fusion matrices are normal
    non-negative matrices and hence are subject to the Frobenius-Perron Theorem.
    The Frobenius-Perron eigenvalue of $N_{a}$ is called the
    \defn{Frobenius-Perron dimension} or \defn{FP-dimension} of $X_{a}$ and
    is denoted by $\FPdim\paren{X_{a}}$ or more succinctly as $d_{a}$. The
    \defn{FP-dimension of $\mcC$} is defined by
    $\FPdim\paren{\mcC}=\sum_{a=0}^{r-1}d_{a}^{2}$, where $r$ is the
    rank of the fusion category $\mcC$. 

    A good deal of information about a fusion category is captured by its
    Grothendieck ring, that is the ring formed by the equivalence
    classes of the objects of the category together with a certain
    class of relations
    defined by short-exact sequences in the category. Such rings can be studied in the absence of their
    categorical origins \cite{O1}. To this end, recall from \cite{O1}, a
    \defn{$\mbbZ_{+}$-basis} of an algebra, which is free as a module over $\mbbZ$, is a basis $B=\lcb
    X_{i}\rcb$ such that $X_{i}X_{j}=\sum_{k}N_{i,j}^{k}X_{k}$ for some
    $N_{i,j}^{k}\in\mbbN$. In other words, the product of basis
    elements can be expressed as a the linear combination of the
    elements of the basis.  The elements of the basis are called \defn{simple},
    the size of the basis is called the \defn{rank}, and the
    coefficients describing the multiplication (${ N_{ij}^k }$) are
    called the \defn{structure coefficients}. A
    \defn{$\mbbZ_{+}$-ring} is a unital algebra over $\mbbZ$ with a fixed
    $\mbbZ_{+}$-basis. Finally, a \defn{unital based ring} is a
    $\mbbZ_{+}$-ring, $A$, with a unit $\one$, a basis $X=\lcb X_{i}\rcb_{i\in
    I}$ with unit $\one = X_{0}$, such that there exists an involution $\cdot^*:X\to X$ (as well as on $I$, as above) and a group homomorphism
    $\t:A\to\mbbZ$ which satisfy that:
    \begin{itemize}
      \item[(i)] The induced map $\sum_{i\in I}a_{i}X_{i}\mapsto \sum_{i\in
      I}a_{i}X^*_{i}$ with $a_{i}\in \mbbZ$ is an anti-involution and 
      \item[(ii)] For all $i,j \in I$, $\t\paren{X_{i}X_{j}}=\d_{i,j^*}$.
      \end{itemize}
      
    The Grothendieck ring of every (braided) fusion
    category yields a (commutative) unital based ring. For this reason we will
    often refer to unital based rings as \defn{fusion rings}.
    \begin{rmk}
      Henceforth we will only be interested in commutative fusion rings and so
      we will drop the word commutative and refer only to fusion rings.
    \end{rmk}
   
    A series of direct calculation reveals that the structure coefficients of a
    fusion ring are highly symmetric. For example, we note that 
    \[ \tau((X_iX_j)X_{k^*}) = \tau(\sum_{\ell}N_{ij}^{\ell}X_{\ell}X_{k^*}) = \sum_{\ell} N_{ij}^{\ell} \delta_{\ell k} = N_{ij}^k\]
    and 
    \[ \tau((X_iX_j)X_{k^*}) = \tau(X_i(X_jX_{k^*})) = \tau(\sum_{\ell}N_{jk^*}^{\ell}X_i X_{\ell}) = \sum_{\ell} N_{jk^*}^{\ell} \tau(X_i X_{\ell})= N_{jk^*}^{i^*}.\]
    Thus $N_{ij}^k = N_{jk^*}^{i^*}$, which is often referred to
    as \defn{rigidity} of the fusion ring.  As a consequence, we have that $N_{ij}^0 = N_{j0}^{i^*} = \delta_{i^*j}$.  Furthermore, since the
    duality operation is an anti-involution,
   \[ \sum_{k} N_{ij}^k X_{k^*} = \paren{\sum_k N_{ij}^k X_k}^* = \paren{X_iX_j}^* = 
    X_{j^*}X_{i^*} = \sum_{k^*} N_{j^*i^*}^{k^*} X_{k^*},\] and in
  particular  $N_{ij}^k = N_{j^*i^*}^{k^*}$ for all $i$, $j$, and $k$.  Combining these symmetries, we have that 
    \[ N_{ij}^{k} = N_{jk^*}^{i^*} = N_{k^*i}^{j^*} = N_{i^*k}^j = N_{kj^*}^{i} = N_{j^*i^*}^{k^*}.\] 
    In fact, together with the commutativity relationship $N_{ij}^k = N_{ji}^k$, these symmetries are necessary and sufficient for the $N_{ij}^{k}$
    to yield a fusion ring. This allows us to dispense with the homomorphism
    $\t$, and consider fusion rings as $\mbbZ$-algebras with presentation 
    \begin{align*}
      \mc{R} := \left\langle X_0 = \one, X_1, \dots, X_{r-1} \mid X_iX_j = \sum_{k = 0}^{r-1} N_{ij}^kX_k, \hs \one X_i = X_i\right\rangle
    \end{align*}
    where there is an involution $*$ with $N_{ii^*}^0 = 1$ and the structure coefficients satisfy the three symmetry
    properties (rigidity, self-involution, and commutativity).    In fact, not only do the structure coefficients completely determine the
    fusion ring, they provide a faithful representation of it.  In particular, letting $a$ and $b$ be arbitrary indices, we have that 
    \begin{align*}
       e_a^T N_i N_j e_b &= \sum_{\ell} N_{ia}^{\ell} N_{j\ell}^{b} 
       = \sum_{\ell} N_{ia}^{\ell}N_{b^*j}^{\ell*} 
       = \sum_{s,t} N_{ia}^sN_{b^*j}^t \delta_{st^*} 
       = \sum_{s,t} N_{ia}^sN_{b^*j}^t \tau(X_s X_t)  \\
       &= \tau\paren{\sum_{s,t} N_{ia}^sN_{b^*j}^t X_s X_t} 
       =\tau\paren{ \paren{\sum_s N_{ia}^s X_s} \paren{\sum_t N_{b^*j}^t X_t}}
       = \tau( X_i X_a X_{b^*} X_j ) \\
    \end{align*}
    and by the commutativity of the fusion ring, this is equal to
    \begin{align*}
     \tau( X_i X_j X_a X_{b^*}) &= \tau\paren{\paren{\sum_{s} N_{ij}^sX_s}\paren{\sum_t N_{ab^*}^t X_t}} = \tau\paren{\sum_{s,t} N_{ij}^s N_{ab^*}^t X_sX_t} 
       = \sum_{s,t} N_{ij}^s N_{ab^*}^t \tau(X_sX_t) \\
       &= \sum_k N_{ij}^k N_{ab^*}^{k^*} 
       = \sum_k N_{ij}^k N_{ka}^b 
       = \sum_k N_{ij}^k e_a^TN_k e_b.
    \end{align*}
     As a consequence, 
    $N_{j}N_{i}=N_{i}N_{j}=\sum_{k=0}^{r-1}N_{i,j}^{k}N_{k}$ and the fusion matrices provide a faithful representation of the fusion ring.

  \subsection{Graphs and Hypergraphs}
     While our primary motivation is to study fusion rings, we will do so through
  the lens of graph theory. For this reason
  we will take a short detour and define the essential terminology for graphs and
  hypergraphs. Once this is accomplished we will establish a correspondence
  between self-dual, multiplicity-free fusion rings and graph/hypergraph pairs.
  
    A (simple) graph $G$ is a ordered pair $(V,E)$ where $V$ is a finite set of \defn{vertices} and $E$ is a subset of the 2-elements subsets of $V$ referred to as the \defn{edges}. We say two vertices are \emph{adjacent}, denoted $u \sim v$, if 
    $\{u,v\} \in E$. A graph is said to be \defn{trivial} if it contains exactly one vertex and \defn{empty} if the edge set is empty.  For a vertex $v$, we define its neighborhood as $N(v) = \{ u \mid \{u,v\} \in E\}$.  The cardinality of the neighborhood of $v$ is the \defn{degree} of $v$, $\deg(v)$.  We will denote by $\delta$ and $\Delta$, the minimum and maximum degree in a graph.   

    A \defn{subgraph} of $G$ is formed by selecting a subset of vertices and
    edges of the graph. That is, $H = (V', E')$ is a subgraph of $G$ if $V'
    \subseteq V$ and $E' \subseteq E$. A \defn{vertex-induced subgraph} is
    formed by taking a subset of vertices $W$ and precisely those edges of $G$
    with both endpoints in $W$. The subgraph of $G$ induced by $W$ is denoted
    $G[W]$. A path $P$ in $G$ is a sequence of vertices $(v_0, v_1, \dots,
    v_k)$ such that one of $(v_i, v_{i+1})$ and $(v_{i+1},v_i)$ is an edge for
    all $i = 0, \dots, k-1$. We define the \defn{length} of a path $P = (v_0,
    v_1, \dots, v_k)$ to be $k$. We call a path \defn{simple} if it does not
    contain any repeated vertices. The \defn{distance} between two
    vertices $u$ and $v$, denoted $d(u,v)$, is defined to be the
    length of the shortest path from $u$ to $v$.  If no such path
    exists, the distance is defined to be infinite.  The maximum
    distance over all pairs of vertices is called the \defn{diameter}
    of a graph.  A \defn{connected component} (or component for
    short) of $G$ is a maximal subgraph $H$ in which any two vertices are connected by
    a path. We say that $G$ is \defn{connected} if it contains precisely one
    component.   We will say that a vertex is \emph{isolated} if its connected
    component has precisely one vertex.

    Similarly to a graph, a \emph{directed graph} or \emph{digraph}, is an ordered pair $(V,E)$ where $E$ is a subset of $V \times V$.  Much of the terminology above extends naturally to digraphs with additional consideration taken for the direction of the edges.  For example, instead of the neighborhood of a vertex $v$ we consider the \defn{in-} and \defn{out-neighborhoods} of a vertex, $N^-(v) = \{ u \mid (u,v) \in E\}$ and $N^+(v) = \{u \mid (v,u) \in E\}$, respectively.  The \defn{in-} and \defn{out-degrees} of a vertex are defined analogously and denoted $\deg^-(v)$ and $\deg^+(v)$.  An
    edge of the form $(v,v)$ is called a \defn{loop} and we will say a vertex is
    \emph{trivial} if it is an isolated vertex without a self-loop.

    A \defn{hypergraph} is a generalization of an undirected graph where the
    edges can join, not just pairs, but any number of vertices. Formally, a
    hypergraph is a pair $H = (V,E)$ where $V$ is a set of vertices and $E
    \subset \mc{P}(V)$ is a set of nonempty subsets of $V$. The elements of $E$
    are called \defn{hyperedges}. In the special case where all the hyperedges
    have size $k$, we say that $H$ is \defn{$k$-uniform}. In particular, a
    simple graph is a $2$-uniform hypergraph.  For further graph
    theoretic terms we refer the reader to \cite{MGT}.

\section{Fusion Ring/Digraph Correspondence}\label{S:correspond}
  In this section we will establish a correspondence between a certain
  class of fusion rings which are multiplicity-free (i.e., $N_{ij}^k
  \leq 1$ for all $i,j,k$) and self-dual (i.e., the involution
  associated with the fusion ring is the identity map) and 
  certain digraph/hypergraph pairs. This correspondence will enable the majority
  of our study.

  We note that the restriction to considering self-dual
  fusion rings induces additional symmetries on the structure
  coefficients, in particular,
  \[ N_{ij}^k = N_{ji}^k = N_{ik}^j = N_{ki}^j = N_{jk}^i = N_{kj}^i.\]
  Thus, it is easy to see that for fixed rank $r$ all multiplicity-free,
  self-dual fusion rings can be determined by considering all
  $\binom{r}{3} + 2\binom{r}{2} + r$ unique possible entries in the fusion
  matrices.  However, this approach has several implicit inefficiencies
  that significantly limit its effectiveness.  Specifically, it fails to
  account for the dependencies between the entries resulting from the
  commutation of the fusion matrices and it fails to account for the
  $\sim (r-1)!$ isomorphic copies of each assignment resulting from the
  relabeling of the elements of the basis.  By rephrasing the problem
  of guessing entries of the fusion matrices as guessing a digraph and a
  hypergraph we can partially address both these issues. This is accomplished by
  exploiting the commutation relationship and exploiting extant tools for dealing
  with graph isomorphism~\cite{nauty}.

  More concretely, we note that there are $2\binom{r}{2} + r$ possible
  edges in a $r$-vertex digraph with self-loops and $\binom{r}{3}$ possible edges
  in a 3-uniform hypergraph on $r$ vertices.  Thus we can build a
  representation of all potential fusion matrices by specifying the
  edges of a directed graph, $D$, with self-loops on $\{0,1,\ldots,r-1\}$ and the edges of a
  3-uniform hypergraph, $H$, on the same set.  Specifically, the digraph and hypergraph
  encode the fusion rules as follows: 
  \begin{itemize}
      \item for $i \in
  \{0,1,\ldots,r-1\}$, there is a self-loop at $i$ in $D$ if and only if $N_{ii}^i =
  1$,
  \item  for $\{i,j\} \subseteq \{0,1,\ldots,r-1\}$ there is a directed edge $(i,j)$ in $D$ if
  and only if $N_{ii}^j = N_{ij}^i = N_{ji}^i = 1$, and
  \item for $\{i,j,k\}
  \subseteq \{0,1,\ldots,r-1\}$ there is a hyperedge $\{i,j,k\}$ in $H$ if and only if
  $N_{ij}^k = N_{ji}^k = N_{ik}^j = N_{ki}^j = N_{jk}^i = N_{kj}^i =
  1$.  
  \end{itemize}   
  We note that the entries of $N_i$ complete describe the edges
  incident with the node $i$, in particular, there is a self-loop at
  $i$ if and only if $\paren{N_i}_{ii} = 1$, a directed edge
  $(i,j)$ if and only if $\paren{N_i}_{jj} = 1$, a directed edge
  $(j,i)$ if and only if $\paren{N_i}_{ij} = \paren{N_i}_{ji} = 1$,
  and a hyperedge $\{i,j,k\}$ if and only if $\paren{N_i}_{jk} =
  \paren{N_i}_{kj} = 1.$  In particular, the edges incident with $0$
  are completely determined by the convention that $X_0 = \one$ and thus $N_0 = I$.
  Specifically, the only edges incident to $0$ are a self-loop and a
  directed edge from $0$ to $i \in [r-1] = \{1,\ldots, r-1\}.$  Thus
  for the remainder, we will focus on the correspondence between a
  self-dual, multiplicity-free rank $r$ fusion ring and a
  digraph/hypergraph pair on $[r-1]$.  In order to illustrate the
  correspondence between fusion rule, fusion matrix, a graphical
  presentations of a fusion ring, in Figure \ref{F:rep}, we present all
  three presentations for the fusion ring $\Rep\paren{\mbbZ_{3}^{2}\rtimes\mbbZ_{2}}$.

  For an arbitrary digraph, hypergraph pair $(D,H)$ we
  will say that $(D,H)$ \defn{generates a fusion ring} if the associated
  matrices $\{N_i\}$ are the fusion matrices for some fusion ring.  We
  will further say that a digraph $D$ \defn{generates a fusion ring} if there
  exists some hypergraph $H$ such that the pair $(D,H)$ generates a
  fusion ring. Finally, we say that $D$ (resp. $(D,H)$) generates a fusion
  category if $D$ (resp. $(D,H)$) generates the Grothendieck ring of the
  category.

    \begin{rmk}
        It is worth noting that the graphical presentation of a self-dual multiplicity-free fusion ring fully captures the invertibility of the non-identity elements.  Specifically, if $X_i \in \Irr(\mcC)$ is invertible element in $\mcC$, then, by the self-duality, it is its own inverse, and in particular, $N_{ii}^j = 0$ for all $j \neq 0$.  As a consequence, if $X_i$ is not the identity, then $i$ does not have a self-loop or an out-neighbor in the graphical presentation of $\mcC$. The converse direction follows along similar lines.
    \end{rmk}

  \begin{figure}
  \centering
  \begin{subfigure}{\textwidth}
  \centering
  \begin{tabular}{c | c c c c c}
  $\otimes$ & $X_1$ & $X_2$ & $X_3$ & $X_4$ & $X_5$ \\ \hline
  $X_1$ &  $\mathbbm{1} + X_1 + X_5$  &  $X_3+X_4 $ & $X_2 + X_4 $ &  $X_2 + X_3 $ & $X_1 $ \\
  $X_2$ &  $X_3+X_4 $  &  $\mathbbm{1} + X_2 + X_5$ & $X_1 + X_4 $ &  $X_1 + X_3$ & $X_2 $ \\
  $X_3$ &  $X_2 + X_4 $  &  $X_1 + X_4 $ & $ \mathbbm{1} + X_3 + X_5$ &  $X_1+X_2$ & $X_3 $ \\
  $X_4$ &  $X_2 + X_3 $  &  $X_1 + X_3 $ & $X_1 + X_2 $ &  $ \mathbbm{1} + X_4 +
  X_5$ & $X_4 $ \\
  $X_5$ &  $X_1$  &  $X_2 $ & $X_3 $ &  $X_4 $ & $ \mathbbm{1} $ \\
  \end{tabular}
  \caption{Fusion Rule Presentation}
  \vspace{\baselineskip}
  \end{subfigure}
  \begin{subfigure}{\textwidth}
  \centering
  \begin{picture}(337,205)
  \put(0,50){$\left[\begin{matrix} 0 & 0 & 0 & 1 &0 &0 \\ 0 & 0 & 1 & 0 
        &1&0 \\ 0 & 1 & 0 & 0 &1 &0 \\ 1 & 0 & 0 & 1 &0 &1 \\ 0 & 1 & 1 
        & 0 &0 &0 \\ 0 & 0 & 0 & 1 &0 &0 \end{matrix}\right]$}
  \put(45,0){$X_3$}
  \put(0,160){$\left[\begin{matrix} 1 & 0 & 0 & 0 &0 &0 \\ 0 & 1 & 0 & 0 
        &0 &0 \\ 0 & 0 & 1 & 0 &0 &0 \\ 0 & 0 & 0 & 1 &0 &0 \\ 0& 0 & 0 
        & 0 &1&0 \\ 0 & 0 & 0 & 0 &0 &1 \end{matrix}\right]$}
  \put(45,110){$\mathbbm{1}$}
  \put(120,50){$\left[\begin{matrix} 0 & 0 & 0& 0 &1 &0 \\ 0 & 0 & 1 & 1 
        &0 &0 \\ 0 & 1 & 0 & 1 &0 &0 \\ 0 & 1 & 1 & 0 &0 &0 \\ 1 & 0 & 0 
        & 0 &1 &1 \\ 0 & 0 & 0 & 0 &1 &0 \end{matrix}\right]$}
  \put(165,0){$X_4$}
  \put(120,160){$\left[\begin{matrix} 0 & 1 & 0 & 0 &0 &0 \\ 1 & 1 & 0 & 0 
        &0 &1 \\ 0 & 0 & 0 & 1 &1 &0 \\ 0 & 0 & 1 & 0 &1 &0 \\ 0 & 0 & 1 
        & 1 & 0 &0 \\ 0 & 1 & 0 & 0 &0 &0 \end{matrix}\right]$}
  \put(165,110){$X_1$}
  \put(240,50){$\left[\begin{matrix} 0 & 0 & 0 & 0 &0 &1 \\ 0 & 1 & 0 & 0 
        &0 &0 \\ 0 & 0 & 1 & 0 &0 &0 \\ 0 & 0 & 0 & 1 &0 &0 \\ 0 & 0 & 0 
        & 0 &1 &0 \\ 1 & 0 & 0 & 0 &0 &0 \end{matrix}\right]$}
  \put(285,0){$X_5$}
  \put(240,160){$\left[\begin{matrix} 0 & 0 & 1 & 0 &0 &0 \\ 0 & 0 & 0 & 1 
        &1&0 \\ 1& 0 & 1 & 0 &0 &1 \\ 0 & 1 & 0 & 0 &1 &0 \\ 0 & 1 & 0 
        & 1 &0 &0 \\ 0 & 0 & 1 & 0 &0 &0 \end{matrix}\right]$}
  \put(285,110){$X_2$}
  \end{picture}
  \caption{Fusion Matrix Presentation}
  \vspace{\baselineskip}
  \end{subfigure}
  \begin{subfigure}{\textwidth}
  \centering
  \begin{picture}(300,140)
  \put(25,20){\tikzmark{b}\circle*{6}}
  \put(25,110){\tikzmark{a}\circle*{6}}
  \put(115,20){\tikzmark{c}\circle*{6}}
  \put(115,110){\tikzmark{d}\circle*{6}}
  \put(70,65){\tikzmark{e}\circle*{6}}
  \put(2,15){$X_2$}
  \put(2,105){$X_1$}
  \put(125,15){$X_3$}
  \put(125,105){$X_4$}
  \put(65,45){$X_5$}
  \put(185,20){\circle*{6}}
  \put(185,110){\circle*{6}}
  \put(275,20){\circle*{6}}
  \put(275,110){\circle*{6}}
  \put(230,65){\circle*{6}}
  \put(162,15){$X_2$}
  \put(162,105){$X_1$}
  \put(285,15){$X_3$}
  \put(285,105){$X_4$}
  \put(225,45){$X_5$}
  \begin{tikzpicture}[remember picture, overlay]
      \draw [{<[scale = 2]}-] ({pic cs:e}) +(-2pt,2pt) coordinate (ea)
      [out=135, in=-45] to ({pic cs:a});
  \draw [{<[scale = 2]}-] ({pic cs:e}) +(-2pt,-2pt) coordinate (eb)
  [out=-135, in=45] to ({pic cs:b});
  \draw [{<[scale = 2]}-] ({pic cs:e}) +(2pt,-2pt) coordinate (ec)
  [out=-45, in=135] to ({pic cs:c});
  \draw [{<[scale = 2]}-] ({pic cs:e}) +(2pt,2pt) coordinate (ed)
  [out=45, in=-135] to ({pic cs:d})
  ;
   \draw [{<[scale = 2]}-] ({pic cs:a}) +(-2pt,2pt) coordinate (aa)
      [out=135, in=45,looseness = 30] to ({pic cs:a});
  \draw [{<[scale = 2]}-] ({pic cs:b}) +(-2pt,-2pt) coordinate (bb)
  [out=-135, in=-45,looseness = 30] to ({pic cs:b});
  \draw [{<[scale = 2]}-] ({pic cs:c}) +(2pt,-2pt) coordinate (cc)
  [out=-45, in=-135,looseness = 30] to ({pic cs:c});
  \draw [{<[scale = 2]}-] ({pic cs:d}) +(2pt,2pt) coordinate (dd)
  [out=45, in=135,looseness = 30] to ({pic cs:d});
  \draw[CB_red,line width = 1.5pt] (185pt,120pt) arc (90:180:10pt) -- (175pt,20pt) arc
  (180:360:10pt) --  (195pt,90pt) arc (180:90:10pt) -- (275pt,100pt) arc
  (-90:90:10pt) -- (185pt,120pt);
  \draw[CB_purple,line width = 1.5pt] (183pt,12pt) arc (270:90:10pt) -- (253pt,32pt) arc (-90:0:10pt) --
  (263pt,112pt) arc (180:0:10pt) -- (283pt,112pt) -- (283pt,22pt) arc 
  (0:-90:10pt) -- (183pt,12pt);
  \draw[CB_lightblue,line width = 1.5pt] (185pt,118pt) arc (90:270:10pt) -- (255pt,98pt) arc (90:0:10pt) --
  (265pt,18pt) arc (-180:0:10pt) -- (285pt,108pt) arc 
  (0:90:10pt) -- (185pt,118pt);
  \draw[CB_green,line width = 1.5pt] (177pt,110pt) arc (180:00:10pt) -- (197pt,40pt) arc
  (-180:-90:10pt) -- (277pt,30pt) arc (90:-90:10pt) -- (187pt,10pt) arc 
  (-90:-180:10pt) -- (177pt,110pt);
    \end{tikzpicture}
  \end{picture}
  \caption{Graphical Presentation}
  \end{subfigure}
  \caption{Presentations of the Fusion Ring
  $\Rep\paren{\mbbZ_{3}^{2}\rtimes\mbbZ_{2}}$.}\label{F:rep}
  \end{figure} 

  It is worth noting that this digraph/hypergraph construction can easily be
  extended to non-multiplicity-free fusion rings by allowing for
  positive integer weights on the edges and hyperedges.  However, this
  is beyond of the scope of this work, and we will not consider it further.

\section{Fusion Rings Generated by Undirected Graphs}\label{S:undirected}
  In this section we will make use of the correspondence established above.
  Specifically we will consider natural sets of digraph, \textit{e.g.},
  undirected graphs, connected graphs, and triangle free graphs, and ask when
  such graphs can generate a fusion ring. Each graph result will be paired with
  the implication on the side of fusion categories. Many of the fusion category
  statements are verbose due to notions in graph theory not being naturally
  expressible in the language of fusion categories, \textit{e.g.}, neighborhood.
  Nonetheless, our results provide a new perspective on the classification and
  enumeration of fusion categories. The power of this approach is displayed in
  our main result, Theorem \ref{Theorem: Main Result}, where we are able to
  provide a complete characterization of fusion categories (up to Grothendieck
  equivalence) generated by triangle free undirected graphs.

  Throughout this section we will assume that the digraph $D$ is undirected,
  that is if $(i,j)$ is an edge in $D$, then so is $(j,i)$.  In order to
  emphasize this assumption we will use $G$ to denote the digraph and simply
  refer to a graph, hypergraph pair.  In terms of the fusion ring this condition
  is equivalent to:
  \begin{equation*}
    N_{ii}^j = N_{ji}^i = N_{ij}^i = N_{jj}^i = N_{ji}^j = N_{ij}^j\quad\text{for
    all $i,j\in[r-1]$.}
  \end{equation*}

  For notational brevity we introduce the following for a given pair
  $\paren{G,H}$:
  \begin{align*}
  e_{ij} &\equiv \textrm{indicator function for the presence of the edge
    $\{i,j\},$} \\
  w_{ij} &\equiv \textrm{number of $k$ such that $\{i,j,k\}$ is a
           hyperedge.}  \\
  \end{align*}
  Using the fact that the fusion matrices commute (in particular, that
  $\paren{N_iN_j}_{ij} = \paren{N_jN_i}_{ij}$) it is easy to see that
  the $w_{ij}$'s and the $e_{ij}$'s are related via 
  \begin{equation} \label{nec2}
    w_{ij} = 1+\sum_{k \in [r-1]}e_{ik}e_{jk}-2e_{ij}.
  \end{equation}
Combinatorially, this equation can be interpreted as saying that if $i$ and $j$ are adjacent, then the number of hyperedges is one less than the number of triangles containing $\{i,j\}$ in $G$ plus the number of loops on $i$ or $j$.  If $i$ and $j$ are not adjacent, then the number of hyperedges containing $\{i,j\}$ is one more than the number of common neighbors of $i$ and $j$ in $G$.

  \subsection{Empty Graphs}
    We first consider the situation where the graph $G$ has no edges, that is,
    $G$ consists of some number of isolated vertices. In this case, we
    can impose additionally structure on the associated hypergraph
    $H$, namely, that it is a $3$-uniform hypergraph where every pair
    of vertices is in precisely one hyperedge.  This is more commonly
    referred to as a Steiner triple system~\cite{lindner2017design}. 
    \begin{thm}\label{emptyGraph}
      Let $G$ be the empty graph on $n$ vertices and let $H$ be such that
      $(G,H)$ generates a fusion ring, then $n = 2^k-1$ for some $k$ and $H$ is
      a Steiner triple system of order $2^k-1$.  Furthermore,  the $H$
      that generates the fusion ring is unique up
      to isomorphism.
    \end{thm}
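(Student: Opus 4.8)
The plan is to translate the emptiness of $G$ into the fusion rules and observe that these rules make the basis into a group. Since $G$ has no edges, $e_{ij} = 0$ for all $i \neq j$, and \eqref{nec2} immediately forces $w_{ij} = 1$ for every pair $\{i,j\} \subseteq [r-1]$. Thus every pair of non-identity vertices lies in exactly one hyperedge, so $H$ is a $3$-uniform hypergraph in which each pair appears in a unique triple, i.e. a Steiner triple system on the $n = r-1$ non-identity vertices. It remains to determine which orders $n$ occur and to settle uniqueness.

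Next I would read off each product $X_iX_j$ using the self-dual symmetries $N_{ij}^k = N_{ik}^j = N_{jk}^i = \cdots$ together with the absence of edges and self-loops. For $i \neq 0$, rigidity gives $N_{ii}^0 = \delta_{ii^*} = 1$, absence of a self-loop gives $N_{ii}^i = 0$, and absence of directed edges gives $N_{ii}^j = 0$ for $j \notin \{0,i\}$; hence $X_i^2 = \one$. For distinct non-identity $i,j$, the symmetries give $N_{ij}^i = N_{ii}^j = 0$ and $N_{ij}^j = N_{jj}^i = 0$, while $N_{ij}^0 = \delta_{ij} = 0$, and the unique $k$ counted by $w_{ij} = 1$ yields $N_{ij}^k = 1$. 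Therefore $X_iX_j = X_k$ is a single basis element. (That $G$ carries no self-loops can either be taken as part of ``empty'', or deduced: a self-loop would give $X_i^2 = \one + X_i$, which is incompatible with the single-term off-diagonal products under associativity.)

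Consequently $\Gamma := \{X_0, X_1, \dots, X_{r-1}\}$ is closed under the fusion product. This product is associative and commutative with unit $X_0 = \one$ (the $N_i$ form a faithful representation, as established above), and each element satisfies $X_i^2 = \one$, so $\Gamma$ is a finite abelian group of exponent dividing $2$ --- an elementary abelian $2$-group, equivalently an $\mathbb{F}_2$-vector space. Its order is $|\Gamma| = r = n+1$, which must be a power of $2$; hence $r = 2^k$ and $n = 2^k - 1$.

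Finally, for uniqueness I would identify the hyperedges of $H$ with the triples $\{i,j,k\}$ of distinct non-identity indices satisfying $X_iX_jX_k = \one$. Under $\Gamma \cong \mathbb{F}_2^k$ these are exactly the triples of distinct nonzero vectors summing to $0$, namely the lines of the projective space $PG(k-1,2)$, and this description depends only on the group $\Gamma$. Since all elementary abelian $2$-groups of order $2^k$ are isomorphic and any group isomorphism fixes $\one$ and sends zero-sum triples to zero-sum triples, it restricts to a hypergraph isomorphism on the non-identity vertices; thus the generating $H$ is unique up to isomorphism. The main obstacle is really the bookkeeping of the second paragraph: one must invoke the full self-dual symmetry relations to guarantee that no spurious $\one$-, $X_i$-, or $X_j$-term survives in $X_iX_j$, since it is precisely this single-term (pointed) structure that upgrades a generic Steiner triple system to the unique binary projective one.
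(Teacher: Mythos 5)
Your proof is correct, but it takes a genuinely different route from the paper's. You agree on the first step: emptiness of $G$ plus Equation \ref{nec2} forces $w_{ij}=1$ for every pair, so $H$ is a Steiner triple system. From there the paper stays combinatorial: it observes that each $N_i$ is a permutation matrix whose graph $G_i$ is a perfect matching on $\{0,\dots,n\}$, that $\sum_{i\ge 1} N_i = J-I$, and hence that a valid $H$ is equivalent to a partition of $K_{n+1}$ into $n$ pairwise commuting perfect matchings; it then cites Akbari et al.\ for the fact that such a decomposition exists iff $n+1=2^k$, and runs a separate inductive hypercube-building argument for uniqueness. You instead notice that the fusion rules close on the basis itself ($X_i^2=\one$ and $X_iX_j=X_k$), so $\{X_0,\dots,X_{r-1}\}$ is an elementary abelian $2$-group under the fusion product; this gives $n+1=2^k$ with no external citation, and uniqueness falls out because the hyperedges are exactly the zero-sum triples of nonzero vectors in $\mathbb{F}_2^k$, i.e.\ the lines of $PG(k-1,2)$, a description invariant under the (unique up to isomorphism) group structure. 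The two arguments encode the same underlying fact --- a decomposition of $K_{2^k}$ into commuting perfect matchings is precisely the data of an elementary abelian $2$-group structure on the vertex set --- but yours is shorter, self-contained, and makes the Grothendieck equivalence with $\Rep(\mathbb{Z}_2^k)$ (the paper's Algebraic Corollary) immediate, whereas the paper's version exposes the design-theoretic structure explicitly. Your bookkeeping in the second paragraph (ruling out $\one$-, $X_i$-, and $X_j$-terms in $X_iX_j$ via $N_{ij}^0=\delta_{ij}$ and $N_{ij}^i=N_{ii}^j=0$) is exactly the point that needs care, and you handle it correctly.
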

    \begin{proof}
    Since $G$ is an empty graph, for any $1 \leq i < j \leq n$ we have that $N_{ii}^j = N_{jj}^i = 0$.  As a consequence, for all such $i$ and $j$, $(N_iN_j)_{ij} = \sum_k N_{ii}^kN_{jk}^j = N_{ii}^0N_{j0}^0 = 1$.  Observing that by the commutativity we have that that $1 = (N_jN_i)_{ij} = \sum_k N_{ji}^kN_{ik}^j$ and that this counts the number of hyperedges containing $\{i,j\}$ plus the number of directed edges between $i$ and $j$, we conclude that there is a unique hyperedge containing $\{i,j\}$.  That is, $H = ([n],E)$ is a Steiner triple system. As Steiner triple systems are known to exist if and only if  $n 
      \equiv 1,2 \pmod{6}$, (see for instance \cite{lindner2017design}) this significantly restricts the possible self-dual, multiplicity-free fusion rings that can be generated by the empty graph.  Indeed, letting $H$ be an arbitrary Steiner triple system generates a set of valid fusion coefficients which respects rigidity and that the duality operation is an anti-involution.  However, not all such Steiner triple systems will resulting in a commuting family of fusion matrices.  

      In order to understand which Steiner triple systems result in a commuting family of fusion matrices, we define for each $i \in [n]$ the graph $G_i$ to be the graph whose adjacency matrix is $N_i$.  We note the fact that $N_{ia}^b = N_{ib}^a$ implies that this is $G_i$ is a graph rather than a digraph. 
      Observing
      that $(N_i)_{i0} = 1$ and that for all $i, j \in [n]$ there is a unique
      $k$ such that $(N_i)_{jk} = 1$, we conclude that each $N_i$ has precisely one non-zero entry in each row, and hence $G_i$ is a perfect
      matching. Finally, noting that for each (not necessarily distinct)
      $j, k \in \{0,\ldots,n\}$ there is precisely one $i \in \{0,\ldots,n\}$ such that $(N_i)_{j k} = 1$ and thus $\sum_{i = 0}^r N_i = J$ where
      $J$ is the $(n+1) \times (n+1)$ matrix of ones.  Now, as $N_0 = I$
      we have that $\sum_{i = 1}^r N_i = J - I$ which is the adjacency
      matrix of the complete graph on $n+1$ vertices.   
      Thus we have that the $G_i$ form a partition of the edges of
      $K_{n+1}$ into $n$ commuting perfect matchings. By 
      \cite[Theorem 1]{Akbari2006}, such a decomposition exists if and only if $n+1 = 2^k$
      for some $k \in \N$. Further, the pair $(G,H)$ generates a fusion ring if and only if $H$ is Steiner triple system defined from a commuting collection of perfect matchings.

      In order to prove uniqueness of $H$ (up to relabeling) we show that in any decomposition
      of $K_{2^k}$ into commuting perfect matchings there exists a
      collection of $k$ of these perfect matchings whose union forms a
      hypercube of dimension $k$.  Furthermore, for every perfect matching
      outside this set, the edge set is uniquely specified by any edge in
      the perfect matching.  

      By way of induction suppose that $M_1, \ldots, M_t$ is a collection of
      commuting perfect matchings on $[2^k]$ whose union consists of $2^{k-t}$
      disjoint $t$-dimensional hypercubes.  Now let $M$ be a perfect matching that
      commutes with and is disjoint from all of the $M_i$.  From
      \cite{Akbari2006}, this implies that $M \cup M_i$ is a collection of
      disjoint 4-cycles for all $i$.  Let $S$ be the vertex set of one of the
      $t$-dimensional hypercubes in $\cup_i M_i$.  We claim that either $M[S]$ has
      zero edges or $2^{t-1}$ edges.  To this end suppose $e$ is an edge in $M[S]$
      and without loss of generality label the vertices of $S$ with length $t$
      binary strings such that one endpoint of $e$ is the binary $0$-string string
      and for all $i$ the endpoints of the edges in $M_i$ differ only in the
      $i^{\textrm{th}}$ component.  Now we note that since $M$ and $M_i$ commute,
      if $e$ is an edge in $M[S]$, then the edge with $i^{\textrm{th}}$ component
      of both endpoints of $e$ changed is also in $M[S]$.  We will abuse notation
      and denote this edge by $e + e_i$.  By a similar argument we have that for
      $j \neq i$, $(e+e_i) + e_j$ is also in $M[S]$.  Noting that this operation
      is commutative, the action of each matching $M_i$ is of order two, and that
      there is a collection of actions which reverse the labels of the endpoints
      of $e$, we have that $M[S]$ contains at least $2^{t-1}$ edges.  In fact, it
      contains exactly this many as $M$ is a perfect matching and $\size{S} =
      2^t$.

      Thus, for every hypercube in $\cup M_i$, there are at most $2^t - t -
      1$ commuting perfect matchings on $[2^k]$ containing an edge internal
      to the hypercube.   As there are $2^{k-t}$ such hypercubes, there are
      at most $2^{k-t}\paren{2^t - t - 1} = 2^k - (t+1)2^{k-t}$ commuting
      hypercubes which have an edge in any of the hypercubes in $\cup M_i$.
      Since there are at $2^k - t - 1$ unused matchings, there is some
      matching $M_{t+1}$ which does not contain an edge internal to any
      hypercube in $\cup M_i$.  By a similar argument as above, $M_{t+1}
      \cup \bigcup_i M_i$ forms $2^{k-t-1}$ hypercubes of dimension $t+1$
      on $[2^k]$.  Repeating this process yields $M_1,\ldots, M_k$, a
      collection of $k$ commuting perfect matchings on $[2^k]$ whose union
      is a hypercube.  The above argument also gives that every matching
      outside $\{M_1,\ldots,M_t\}$ is completely determined by a single
      edge, as desired.
    \end{proof}

  \begin{Acor}
    If $\mcC$ is a multiplicity-free, self-dual, braided fusion category of rank $n+1$ such that every element is invertible, then $n=2^{k}-1$ for some $k\in\mbbZ$ and $\mcC$ is Grothendieck equivalent
    to $\Rep\paren{\mbbZ_{2}^{k}}$.
  \end{Acor}

  \subsection{Simple Components}
    In order to further our understanding of which undirected graphs $G$
    can generate a fusion ring we will next consider the non-trivial
    components $G$ which do not contain a self-loop.

    \begin{lem}
      \label{neighbors}
      Let $G$ be an undirected graph that generates a fusion ring and let $C$ be
      a nontrivial component 
      of $G$ which contains no self-loops.  For every vertex $v \in C$, the minimum degree of $C[N(v)]$
      is at least 2.
    \end{lem}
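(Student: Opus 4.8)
The plan is to translate the conclusion into a statement about common neighborhoods and then rule out a single degenerate case using associativity. Since every neighbor of $v$ lies in $C$, we have $C[N(v)] = G[N(v)]$, and the degree of a vertex $u\in N(v)$ inside this induced subgraph is exactly $|N(u)\cap N(v)|$, the number of common neighbors of $u$ and $v$ (equivalently the number of triangles of $G$ through the edge $\{u,v\}$; here the absence of self-loops guarantees $u\notin N(u)$, so no off-by-one issue arises). Thus the goal becomes: for every edge $\{u,v\}$ with $u\in N(v)$ one has $|N(u)\cap N(v)|\ge 2$. First I would invoke \eqref{nec2}: because $\{u,v\}$ is an edge, $w_{uv} = |N(u)\cap N(v)| - 1$, and since $w_{uv}\ge 0$ counts hyperedges, this already forces at least one common neighbor. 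It therefore remains only to exclude the case $|N(u)\cap N(v)| = 1$.

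Suppose, for contradiction, that $u\in N(v)$ has a \emph{unique} common neighbor $w$ with $v$. Then $w_{uv}=0$, so no hyperedge contains $\{u,v\}$; in particular the triangle $\{u,v,w\}$ is not a hyperedge. Using $N_{uv}^u = N_{uv}^v = e_{uv} = 1$ together with the absence of hyperedge partners, the fusion product collapses to $X_uX_v = X_u + X_v$. This collapsed relation is what I would play against associativity.

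The crux is to expand the triple product $X_w(X_uX_v)$ in two ways: distributing the collapsed product gives $X_wX_u + X_wX_v$, while associating as $(X_wX_u)X_v$ and using the edge expansion $X_wX_u = X_w + X_u + \sum_{s\colon\{w,u,s\}\in H}X_s$ gives the other form. Equating the two and cancelling common terms leaves the identity
\[ X_w + \sum_{s\colon\{w,u,s\}\in H} X_s \;=\; X_v + \sum_{s\colon\{w,u,s\}\in H} X_sX_v. \]
I would then read off the coefficient of the basis element $X_v$ on each side. On the left it is $0$, since $w\neq v$ and, crucially, $v$ is not a hyperedge partner of $\{w,u\}$ precisely because $\{u,v,w\}$ is not a hyperedge. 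On the right it is $1 + \sum_{s}N_{sv}^v = 1 + \sum_s e_{sv}\ge 1$. This equation $0 = 1 + (\text{nonnegative})$ is the desired contradiction, so in fact $|N(u)\cap N(v)|\ge 2$ and the minimum degree of $C[N(v)]$ is at least $2$.

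I expect the main obstacle to be recognizing that dimension counting does not suffice: applying the Frobenius--Perron homomorphism to all of the relations above only reproduces consistent identities (for instance the collapsed product merely forces $(d_u-1)(d_v-1)=1$, which has genuine solutions), so no contradiction can come from $\FPdim$ alone. The contradiction must instead be extracted from a single structure coefficient carried through an associativity relation, and the real work lies in selecting the right triple product $X_w(X_uX_v)$ and the right target coefficient $X_v$, and in verifying that $v$ cannot appear among the hyperedge partners of $\{w,u\}$ — which is exactly the content of $w_{uv}=0$.
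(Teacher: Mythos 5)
Your proof is correct, and its skeleton matches the paper's: both use Equation \ref{nec2} to reduce to an edge $\{u,v\}$ whose endpoints have exactly one common neighbor, so that $w_{uv}=0$ and $X_uX_v$ collapses to $X_u+X_v$, and both then exhibit an inconsistency in the structure constants. The difference is tactical. The paper compares the $(x,v)$ and $(v,x)$ entries of $N_vN_u$ (which must agree because the $N_i$ are symmetric and commute), obtaining $1$ versus at least $2$; you instead expand $X_w(X_uX_v)=(X_wX_u)X_v$ and compare the coefficient of $X_v$, which in matrix terms is the equality $(N_uN_w)_{vv}=(N_wN_u)_{vv}$ --- a different pair of fusion matrices and a different entry. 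Your computation only needs $w$ to be a neighbor of $u$ distinct from $v$ together with $\{u,v,w\}\notin H$ (which is exactly the content of $w_{uv}=0$), so uniqueness of the common neighbor plays no role beyond guaranteeing $w_{uv}=0$ in the first place; and your aside that $\FPdim$ alone cannot detect the obstruction, since $(d_u-1)(d_v-1)=1$ is satisfiable, is a worthwhile observation. Both arguments are instances of the same underlying failure of the candidate coefficients to define a commutative associative ring, so the two proofs are equivalent in substance though not in detail.
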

    \begin{proof}
      Suppose not. Then there exists a vertex $v$ which has a neighbor $u$ with
      degree at most 1 in $C[N(v)]$. In fact, $u$ must have a exactly one
      neighbor in $C[N(v)]$ since otherwise, by Equation \ref{nec2}, $\{u,v\}$
      would participate in $-1$ hyperedges. Let $x$ be the unique vertex that is
      adjacent to both $u$ and $v$. Then noting that $w_{uv} = 0$ by Equation
      \ref{nec2}, we have that
      \begin{align*}
        (N_vN_u)_{xv} &= \sum_k N_{vx}^kN_{uk}^v = N_{vx}^{u}e_{uv} + e_{vx}e_{vu} = e_{vx}e_{vu} = 1, \textrm{ while } \\
        (N_vN_u)_{vx} &=\sum_{\ell}N_{vv}^{\ell}N_{u\ell}^x \geq e_{vu}e_{ux}+e_{vx}e_{xu} = 2
      \end{align*}
      a contradiction.
    \end{proof}

    \begin{Acor}
      If $\mcC$ is a multiplicity-free, self-dual, braided fusion category such
      that $\dim \Hom_{\mcC}\paren{X\otimes X,X} = 0$ for all $X \in \Irrstar(\mcC)$ and 
     $\dim \Hom_{\mcC}\paren{X\otimes X,Y} = \dim \Hom_{\mcC}\paren{Y\otimes Y,X}$ for all distinct objects $X,Y \in \Irrstar(\mcC)$, then     
      then for all non-invertible $X\in \Irrstar\paren{\mcC}$, $X\otimes X$ contains
      at least three non-isomorphic subobjects from $\Irrstar\paren{\mcC}$.
    \end{Acor}

    It is an easy consequence of Lemma \ref{neighbors} that every vertex and
    edge in such a loopless component participates in at least 2 triangles.
    Consequently the minimum degree of such a component is at least 3.  In fact,
    with care we can show that the minimum degree of such a component is at
    least 4.

    \begin{lem}
      \label{L:mindeg}  Suppose that $G$ is an undirected graph which
      generates a fusion ring and let $C$ be a non-trivial
      component of $G$ with no self-loops, then the minimum degree of $C$ is at least 4.
    \end{lem}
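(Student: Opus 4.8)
The plan is to argue by contradiction: assume $C$ has a vertex of degree at most $3$ and exploit the rigidity of the local structure forced by Lemma \ref{neighbors}. First I would note that Lemma \ref{neighbors} already rules out degrees $1$ and $2$: if $\deg(v)\le 2$ then $C[N(v)]$ has at most two vertices and so minimum degree at most $1$, contradicting the lemma. Hence the only surviving case is a vertex $v$ with $\deg(v)=3$ and $N(v)=\{a,b,c\}$. Since $C[N(v)]$ is a graph on three vertices of minimum degree at least $2$, it must be $K_3$, so $\{v,a,b,c\}$ induces a $K_4$ in $G$.

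Next I would read off the local fusion data around $v$. Because $v$ carries no self-loop and has degree $3$, we get $X_vX_v=\one+X_a+X_b+X_c$. Applying Equation \ref{nec2} to each edge at $v$ gives $w_{va}=w_{vb}=w_{vc}=1$ (each such pair has exactly the two common neighbours supplied by the $K_4$), so every edge at $v$ lies in a unique hyperedge; in particular $X_vX_a=X_v+X_a+X_p$ for a single vertex $p$, which may or may not lie in $\{b,c\}$. Writing $R_a=N(a)\setminus\{v,b,c\}$, so that $X_a^2=\one+X_v+X_b+X_c+\sum_{r\in R_a}X_r$, and letting $H_{ab}$ denote the sum of $X_m$ over hyperedges $\{a,b,m\}$ (and likewise $H_{ac}$), so that $X_aX_b=X_a+X_b+H_{ab}$, I would expand the associativity relation $X_v(X_vX_a)=(X_vX_v)X_a$ and cancel common terms to reach the single identity
\[
  X_vX_p + X_p \;=\; X_a + X_b + X_c + \sum_{r\in R_a}X_r + H_{ab} + H_{ac}.
\]

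The crux is then a counting step: apply the coefficient-sum functional (which, by multiplicity-freeness, just counts basis elements with multiplicity) to both sides. Using Equation \ref{nec2} once more, the left-hand side evaluates to $2+|N(v)\cap N(p)|$, while the right-hand side equals $\deg(a)+|N(a)\cap N(b)|+|N(a)\cap N(c)|-2$. The $K_4$ forces $\deg(a)\ge 3$ and $|N(a)\cap N(b)|,|N(a)\cap N(c)|\ge 2$ (each pair shares the two remaining vertices of the $K_4$), so the right-hand side is at least $5$; meanwhile the left-hand side is at most $2+3=5$, with equality only if $p$ is adjacent to all of $a,b,c$. That borderline case is self-defeating: if $p$ is adjacent to each of $a,b,c$, then $a$ gains the extra neighbour $p$ and each pairwise intersection gains $p$ as well, pushing the right-hand side to at least $8$. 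In every case the right-hand side strictly exceeds the left, contradicting the identity.

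The step I expect to be the main obstacle is controlling the hyperedge partner $p$, since a priori it could coincide with $b$ or $c$ (an ``internal'' hyperedge) or be an entirely new vertex (an ``external'' one), and naively these behave rather differently. The key simplification I am relying on is that passing to the coefficient-sum collapses both possibilities into one inequality: the one-less-than-triangle-count bookkeeping of Equation \ref{nec2} guarantees that the two triangles on the edges $\{a,b\}$ and $\{a,c\}$ already contribute enough hyperedge mass on the right to beat whatever $X_vX_p$ can supply on the left.
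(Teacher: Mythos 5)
Your proof is correct, but it takes a genuinely different route from the paper's. The paper works entry-by-entry with the commutativity (equivalently, the symmetry) of products of fusion matrices: starting from the induced $K_4$ on $\{i,j,k,\ell\}$ it first shows via $(N_\ell N_i)_{ij}\neq(N_\ell N_i)_{ji}$ that the unique hyperedge on each edge at $\ell$ must have its third vertex $s$ outside the $K_4$, then successively forces more structure (hyperedges $\{s,j,\ell\},\{s,k,\ell\}$, a further vertex $t$ with $s\sim k$, $k\sim t$, $i\sim t$, $j\sim t$), and finally derives a contradiction from a hyperedge $\{i,j,a\}$. You instead write down the single associativity identity $X_v(X_vX_a)=(X_vX_v)X_a$, cancel, and apply the coefficient-sum functional together with Equation \ref{nec2}, reducing everything to the inequality $2+|N(v)\cap N(p)|=\deg(a)+|N(a)\cap N(b)|+|N(a)\cap N(c)|-2$ with left side at most $5$ and right side at least $5$; this is shorter and neatly sidesteps the case analysis over where the hyperedge partners live. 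I verified the expansion and both coefficient counts, and they are right (note that $2e_{vp}+w_{vp}=1+|N(v)\cap N(p)|$ holds whether or not $p\sim v$, so the two cases for $p$ really do collapse). One small expository gap: your treatment of the borderline case (``$p$ adjacent to all of $a,b,c$ pushes the right side to $8$'') tacitly assumes $p\notin\{b,c\}$, since it speaks of $a$ \emph{gaining} the neighbour $p$. The subcase $p\in\{b,c\}$ needs a separate half-sentence: there, $p$ adjacent to all of $a,b,c$ would force $p\sim p$, i.e.\ a self-loop in $C$, which is excluded; alternatively, equality forces $\deg(a)=3$, so $N(a)=\{v,b,c\}$ and $p\in N(a)$ gives $p\in\{b,c\}$, again yielding the self-loop contradiction. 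With that patch the argument is complete and, in my view, cleaner than the paper's.
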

    \begin{proof}
      Let $H$ be a hypergraph such that $(G,H)$ generates a fusion ring and
      let $C$ be a non-trivial simple component of $G$ with minimum degree
      at most $3$.  As a consequence of Lemma \ref{neighbors}, the component $C$
      contains a vertex $\ell$ of degree exactly 3. Furthermore, $C[N(\ell) \cup
      \{\ell\}]$ is a complete graph on 4 vertices, $\{i,j,k,\ell\}.$
      By Equation \ref{nec2} we have that $w_{\ell i} = w_{\ell j} =
      w_{\ell k} = 1$.  Suppose now that $\{i,j,\ell\}$ is the
      unique hyperedge involving $i$ and $\ell$, and note that
      \begin{equation*}
        (N_{\ell}N_i)_{i j} = \sum_u N_{\ell i}^u N_{i u}^j = e_{\ell i} N_{i \ell}^j + e_{i \ell} e_{i j} + N_{\ell i}^je_{j i} = 3,
      \end{equation*}
      while
      \begin{equation*}
        (N_{\ell}N_i)_{j i} = \sum_v N_{\ell j}^v N_{i v}^i = e_{j
          \ell}e_{i j} + e_{\ell j}e_{i \ell} + N_{\ell j}^i e_{ii} = 2,
      \end{equation*}
      which is a contradiction to the commutativity of $N_{\ell}$ and $N_i$.  As $i$ and
      $j$ were arbitrary, we have that $\{i, k, \ell\}$ and $\{j, k, \ell\}$
      are also not hyperedges in $H$.

      Thus, there is some vertex $s \notin \{i,j,k,\ell\}$ such that
      $\{s,i,\ell\}$ is a hyperedge. Now note that
      \begin{equation*}
        (N_{\ell}N_j)_{\ell i} = \sum_u N_{\ell \ell}^u N_{j u}^i = e_{\ell i} e_{i j} + e_{\ell j} e_{j i} + e_{\ell k} N_{j k}^i = 2 + N_{j k}^i
      \end{equation*}
      and 
      \begin{equation*}
        (N_{\ell}N_j)_{i \ell} = \sum_v N_{\ell i}^v N_{j v}^{\ell} = e_{\ell i} e_{\ell j}  + e_{i \ell} N_{j i}^{\ell} + N_{\ell i}^s N_{j s}^{\ell} = 1 + N_{j \ell}^s.
      \end{equation*}
      Thus, by the symmetry of $N_{\ell}N_i$ we have that $\{i,j,k\} \notin
      E(H)$ and $\{s, j, \ell\} \in E(H)$.  A similar argument shows that $\{s,
      k,\ell\} \in E(H)$.  Figure \ref{F:first_step} illustrates the required
      edges and hyperedges thus far.

      \begin{figure}
        \hfill
        \subfloat[Edges in $G$]{
          \begin{tikzpicture}
            \draw[fill = black] (0,0) circle (.1cm);
            \node at (0,-.35) {$\ell$};
            \draw[fill = black] (-1.5,1.5) circle (.1cm);
            \node at (-1.5,1.15) {$i$};
            \draw[fill = black] (0,3) circle (.1cm); 
            \node at (-.25,3) {$j$};
            \draw[fill = black] (1.5,1.5) circle (.1cm);
            \node at (1.5,1.15) {$k$};
            \draw[fill = black] (0,4.5) circle (.1cm);
            \node at (-.25,4.5) {$s$};
            \draw (0,0) -- (1.5,1.5) -- (0,3) -- (-1.5,1.5) -- (0,0);
            \draw (1.5,1.5) -- (-1.5,1.5);
            \draw (0,0) -- (0,3);
          \end{tikzpicture}
        }
        \hfill\hfill
        \subfloat[Hyperedges in $H$]{
          \begin{tikzpicture}
            \draw[fill = black] (0,0) circle (.1cm);
            \node at (0,-.35) {$\ell$};
            \draw[fill = black] (-1.5,1.5) circle (.1cm);
            \node at (-1.5,1.15) {$i$};
            \draw[fill = black] (0,3) circle (.1cm); 
            \node at (-.25,3) {$j$};
            \draw[fill = black] (1.5,1.5) circle (.1cm);
            \node at (1.5,1.15) {$k$};
            \draw[fill = black] (0,4.5) circle (.1cm);
            \node at (-.25,4.5) {$s$};
            \path plot[smooth cycle, tension =0.2] coordinates{(-.4,-.4) (-.4,4.6) (.4,4.6) (.4,-.4)}; 
            \path plot[smooth cycle, tension= 0.2] coordinates{(.57,0) (0,-.57) (-2.06,1.44) (-.18,5.04) (.54,4.78) (-1.05,1.62)}; 
            \path plot[smooth cycle, tension= 0.2] coordinates{(-.57,0) (0,-.57) (2.06,1.44) (.18,5.04) (-.54,4.78) (1.05,1.62)};
            \path plot[smooth cycle, tension = 0.2] coordinates{(-.53,-.21) (.21, -.53) (2,3.64) (-.11,5.05) (-.55,4.39) ( 1,3.36 )};
            \draw plot[smooth cycle, tension =0.2] coordinates{(-.4,-.4) (-.4,4.6) (.4,4.6) (.4,-.4)}; 
            \draw plot[smooth cycle, tension= 0.2] coordinates{(.57,0) (0,-.57) (-2.06,1.44) (-.18,5.04) (.54,4.78) (-1.05,1.62)}; 
            \draw plot[smooth cycle, tension= 0.2] coordinates{(-.57,0) (0,-.57) (2.06,1.44) (.18,5.04) (-.54,4.78) (1.05,1.62)};
            \path plot[smooth cycle, tension = 0.2] coordinates{(-.53,-.21) (.21, -.53) (2,3.64) (-.11,5.05) (-.55,4.39) ( 1,3.36 )};
          \end{tikzpicture}
        }
        \hfill\hfill
        \caption{Initial edges and hyperedges required by vertex of degree 3.}\label{F:first_step}
      \end{figure}

      As $s \not\sim \ell$ and $w_{\ell s} \geq 3$, we have by Equation
      \ref{nec2} that $\ell$ and $s$ have at least two common
      neighbors in $G$.  Without
      loss of generality suppose that $i$ and $j$ are common neighbors of $\ell$
      and $s$.  At this point, we have that either $s \not\sim k$ and $w_{\ell
      s} = 3$ or $s \sim k$ and there exists a vertex $t \not\in \{i,j,k\}$ such that $\{\ell, s,
      t\} \in E(H)$.  Now considering 
      \begin{equation*}
        (N_{\ell}N_k)_{k s} = \sum_u N_{\ell k}^u N_{k u}^s = e_{\ell k} N_{k \ell}^s + e_{k \ell} e_{k s} + N_{\ell k}^s e_{s k} = 1 + 2e_{k s},
      \end{equation*}
      and
      \begin{equation*}
        (N_{\ell}N_k)_{sk} = \sum_v N_{\ell s}^vN_{k v}^k = N_{\ell
          s}^{i}e_{k i} + N_{\ell s}^{j} e_{k j} + N_{\ell s}^te_{k t}
        + N_{\ell s}^k e_{kk} = 2 + N_{\ell s}^te_{k t},
      \end{equation*}
      we have that $1 + e_{k s} = 2 + N_{\ell s}^te_{k,t}$.  Thus, $s \sim k$, $\{\ell, s, t\} \in E(H)$, and $k \sim t$.  But
      as we now know that $k \sim s$, this implies that $i$ and $j$
      were chosen arbitrarily.  Repeating the
      previous argument for $\{i,k\}$ and $\{j,k\}$ and using that
      $w_{\ell s} = 4$, we get that $i \sim t$ and $j \sim t$.  As a
      result we have the edges and hyperedges depicted in Figure
      \ref{F:second_step}.

      \begin{figure}
\centering
        \hfill
        \subfloat[Edges in $G$]{
          \begin{tikzpicture}
            \draw[fill = black] (0,0) circle (.1cm);
            \node at (0,-.35) {$\ell$};
            \draw[fill = black] (-1.5,1.5) circle (.1cm);
            \node at (-1.5,1.15) {$i$};
            \draw[fill = black] (0,3) circle (.1cm); 
            \node at (-.25,3) {$j$};
            \draw[fill = black] (1.5,1.5) circle (.1cm);
            \node at (1.5,1.15) {$k$};
            \draw[fill = black] (0,4.5) circle (.1cm);
            \node at (-.25,4.5) {$s$};
            \draw[fill = black] (1.5,3.5) circle (.1cm); 
            \node at (1.5,3.75) {$t$};
            \draw (0,0) -- (1.5,1.5) -- (0,3) -- (-1.5,1.5) -- (0,0);
            \draw (1.5,1.5) -- (-1.5,1.5);
            \draw (0,0) -- (0,3);
            \draw (0,4.5) -- (-1.5,1.5) -- (1.5,3.5) -- (1.5,1.5) -- (0,4.5) -- (0,3) -- (1.5,3.5);

            \path plot[smooth cycle, tension =0.2] coordinates{(-.4,-.4) (-.4,4.6) (.4,4.6) (.4,-.4)}; 
            \path plot[smooth cycle, tension= 0.2] coordinates{(.57,0) (0,-.57) (-2.06,1.44) (-.18,5.04) (.54,4.78) (-1.05,1.62)}; 
            \path plot[smooth cycle, tension= 0.2] coordinates{(-.57,0) (0,-.57) (2.06,1.44) (.18,5.04) (-.54,4.78) (1.05,1.62)};
            \path plot[smooth cycle, tension = 0.2] coordinates{(-.53,-.21) (.21, -.53) (2,3.64) (-.11,5.05) (-.55,4.39) ( 1,3.36 )};
          \end{tikzpicture}
        }
        \hfill\hfill
        \subfloat[Hyperedges in $H$]{
          \begin{tikzpicture}
            \draw[fill = black] (0,0) circle (.1cm);
            \node at (0,-.35) {$\ell$};
            \draw[fill = black] (-1.5,1.5) circle (.1cm);
            \node at (-1.5,1.15) {$i$};
            \draw[fill = black] (0,3) circle (.1cm); 
            \node at (-.25,3) {$j$};
            \draw[fill = black] (1.5,1.5) circle (.1cm);
            \node at (1.5,1.15) {$k$};
            \draw[fill = black] (0,4.5) circle (.1cm);
            \node at (-.25,4.5) {$s$};
            \draw[fill = black] (1.5,3.5) circle (.1cm); 
            \node at (1.5,3.75) {$t$};
            \draw[CB_red,line width = 1.5pt] plot[smooth cycle, tension = 0.2] coordinates{(-.4,-.4) (-.4,4.6) (.4,4.6) (.4,-.4)}; 
            \draw[CB_green, line width = 1.5pt] plot[smooth cycle, tension = 0.2] coordinates{(.57,0) (0,-.57) (-2.06,1.44) (-.18,5.04) (.54,4.78) (-1.05,1.62)}; 
            \draw[CB_purple, line width = 1.5pt] plot[smooth cycle, tension = 0.2] coordinates{(-.57,0) (0,-.57) (2.06,1.44) (.18,5.04) (-.54,4.78) (1.05,1.62)};
            \draw[CB_lightblue, line width = 1.5pt] plot[smooth cycle, tension = 0.2] coordinates{(-.53,-.21) (.21, -.53) (2,3.64) (-.11,5.05) (-.55,4.39) ( 1,3.36 )};
          \end{tikzpicture}
        }
        \hfill\hfill
        \caption{Edges and hyperedges required by neighbors of vertex of degree 3.}\label{F:second_step}
      \end{figure}

      Now consider $i$ and $j$. By Equation \ref{nec2}, $w_{ij} \geq 3$.
      Further, from the above, we know that $\{i,j,k\}$ and $\{i,j,\ell\}$ are
      not hyperedges. Thus, there exists a vertex $a \notin \{\ell, k, s, t\}$
      in the graph such that $\{i,j,a\}$ is a hyperedge. Now note that
      \begin{equation*}
        (N_{\ell}N_i)_{\ell a} = \sum_u N_{\ell\ell}^uN_{i u}^a \geq e_{\ell j} N_{i j}^a \geq 1,
      \end{equation*}
      and 
      \begin{equation*}
        (N_{\ell}N_i)_{a \ell} = \sum_v N_{\ell a}^v N_{i v}^{\ell} = e_{\ell a} e_{\ell i} + N_{\ell a}^i e_{i v} + N_{\ell a}^s N_{i s}^{\ell} = 0,
      \end{equation*}
      contradicting the symmetry of $N_{\ell}N_i$.  Thus, we may conclude that
      the minimum degree in $C$ is at least 4.
    \end{proof}

    \begin{Acor}
      If $\mcC$ is a self-dual, multiplicity-free, braided fusion category such that 
     $\dim \Hom_{\mcC}\paren{X\otimes X,X} = 0$ for all $X \in \Irrstar(\mcC)$ and 
     $\dim \Hom_{\mcC}\paren{X\otimes X,Y} = \dim \Hom_{\mcC}\paren{Y\otimes Y,X}$ for all distinct objects $X,Y \in \Irrstar(\mcC)$,
       then all for all non-invertible
      $Y\in\Irrstar\paren{\mcC}$ the product $Y\otimes Y$ contains at least 4 non-isomorphic 
      subobjects from $\Irrstar\paren{\mcC}$.
    \end{Acor}

    \begin{rem}
      In \cite{Fijavz2010}, they show that an undirected graph with minimum
      degree at least 4 contains either $K_5$ or $K_{2,2,2}$ as a minor.
      Further, by Kuratowski's Theorem, a graph with a $K_5$ minor is nonplanar.
      Thus a graph coming from a fusion ring which contains an undirected,
      simple, nontrivial component is either nonplanar or contains a $K_{2,2,2}$
      minor. 
    \end{rem} 

  \subsection{Non-simple Components}
    In order to understand the behavior of the non-simple components of an
    undirected graph generating a fusion ring it will be helfpul to
    rephrase implications of Equation \ref{nec2} in terms of the local
    loop structure of graph.

    \begin{lem} 
      \label{forceloop}
      Let $G$ be an undirected graph that generates a fusion ring. 
      \begin{enumerate}
        \item If $i \not\sim j$ in $G$, then $w_{ij} = 1+|N(i)\cap N(j)|.$
        \item If $i \sim j$ in $G$ and $(N(i) \cap N(j))\setminus \{i,j\} = \emptyset$, then
        \begin{itemize}
          \item $w_{ij} = 0$ and there is a loop in $G$ on precisely one of $i$ and $j$, or
          \item $w_{ij} = 1$ and there is a loop on both vertices in $G$.
        \end{itemize}
        \item If $i \sim j$ in $G$ and both vertices are loopless, then they must share a common neighbor. \label{E:loopless}
      \end{enumerate} 
    \end{lem}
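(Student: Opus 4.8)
The plan is to derive all three statements directly from Equation~\ref{nec2}, the only additional input being that $w_{ij}$, as a count of hyperedges, is a nonnegative integer. The one point requiring care is that the index $k$ in the sum $\sum_{k \in [r-1]} e_{ik}e_{jk}$ ranges over all of $[r-1]$, so it includes the terms $k = i$ and $k = j$; these terms are exactly what encode the self-loop structure at $i$ and $j$. Writing $e_{ii}$ (resp.\ $e_{jj}$) for the loop indicator $N_{ii}^i$ (resp.\ $N_{jj}^j$) and recalling that $e_{ji} = e_{ij}$, I would first split off these two diagonal terms:
\[
\sum_{k \in [r-1]} e_{ik}e_{jk} = \bigl|(N(i)\cap N(j))\setminus\{i,j\}\bigr| + e_{ij}\,e_{ii} + e_{ij}\,e_{jj},
\]
since for $k \notin \{i,j\}$ the product $e_{ik}e_{jk}$ equals $1$ precisely when $k$ is a common neighbour of $i$ and $j$. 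Substituting into Equation~\ref{nec2} gives the single master identity
\[
w_{ij} = 1 + \bigl|(N(i)\cap N(j))\setminus\{i,j\}\bigr| + e_{ij}(e_{ii}+e_{jj}) - 2e_{ij},
\]
from which every case is read off.

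For Part~(1), $i \not\sim j$ means $e_{ij}=0$, so the two loop terms vanish and the identity reads $w_{ij}=1+\bigl|(N(i)\cap N(j))\setminus\{i,j\}\bigr|$. Since $i \not\sim j$, neither $i$ nor $j$ can lie in $N(i)\cap N(j)$ (a loop at $i$ alone does not make $i$ adjacent to $j$), so the set difference is superfluous and $w_{ij}=1+|N(i)\cap N(j)|$, as claimed. For Part~(2), $i\sim j$ gives $e_{ij}=1$, and the hypothesis $(N(i)\cap N(j))\setminus\{i,j\}=\emptyset$ collapses the identity to $w_{ij}=e_{ii}+e_{jj}-1$. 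As $e_{ii},e_{jj}\in\{0,1\}$ and $w_{ij}\ge 0$, the value of $e_{ii}+e_{jj}$ must be $1$ or $2$: a loop on exactly one of $i,j$ yields $w_{ij}=0$, and loops on both yield $w_{ij}=1$, which is exactly the stated dichotomy.

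For Part~(3), suppose $i\sim j$ with both vertices loopless, so $e_{ij}=1$ and $e_{ii}=e_{jj}=0$. If $i$ and $j$ had no common neighbour, i.e.\ $(N(i)\cap N(j))\setminus\{i,j\}=\emptyset$, the master identity would force $w_{ij}=-1$, contradicting $w_{ij}\ge 0$; hence $i$ and $j$ must share a common neighbour. I do not anticipate a genuine obstacle: the lemma is essentially a careful reading of Equation~\ref{nec2} together with nonnegativity of $w_{ij}$. The only step where an error is easy to introduce — and the one I would verify most carefully — is the accounting of the diagonal terms $k=i$ and $k=j$ in the sum, since it is precisely these terms that translate the abstract commutation relation into a statement about loops.
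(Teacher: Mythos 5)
Your proof is correct and matches the paper's approach: the paper gives no explicit proof of Lemma~\ref{forceloop}, presenting it as a direct rephrasing of Equation~\ref{nec2} (whose combinatorial reading in terms of common neighbours and loops is stated immediately after that equation), and your careful separation of the diagonal terms $k=i$ and $k=j$ together with the nonnegativity of $w_{ij}$ is exactly the intended argument.
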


    Our first observation shows that if an undirected graph $G$ generates
    a fusion ring then it has at most one component containing loops.  
    \begin{lem} 
      \label{loopdist}
      Let $G$ be an undirected graph that generates a fusion ring, then for any
      two looped vertices $i$ and $j$ in $G$, $d(i,j) \leq 2$.
    \end{lem}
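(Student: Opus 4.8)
The plan is to argue by contradiction and to reduce everything to a single clean product relation. Suppose $i$ and $j$ are both looped vertices with $d(i,j) \ge 3$. Then $i \not\sim j$, and since any common neighbor would place $i$ and $j$ at distance $2$, we also have $N(i) \cap N(j) = \emptyset$. The goal is to produce a vertex that is forced to be a common neighbor, contradicting this emptiness.

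First I would pin down the product $X_iX_j$. Since $i \not\sim j$, part (1) of Lemma \ref{forceloop} (equivalently Equation \ref{nec2}) gives $w_{ij} = 1 + \size{N(i)\cap N(j)} = 1$, so there is a unique hyperedge $\{i,j,m\}$, with $m \notin \{i,j\}$. Because $e_{ij} = 0$ kills the two edge-type coefficients $N_{ij}^i = N_{ij}^j = e_{ij}$, and $N_{ij}^0 = \delta_{ij} = 0$, the only surviving structure coefficient is the one coming from the unique hyperedge, so the product collapses to a single basis element, $X_iX_j = X_m$. Establishing this collapse is the crux of the argument, and the step I expect to require the most care, since it hinges on correctly classifying which structure coefficients are edge-type versus hyperedge-type.

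Next I would exploit the loop at $i$. Because a repeated index forces each coefficient $N_{ii}^k$ (for $k \ne 0,i$) to be an edge indicator rather than a hyperedge weight, squaring gives $X_i^2 = \one + X_i + \sum_{k \in N(i)} X_k$. Multiplying the relation $X_iX_j = X_m$ by $X_i$ and using associativity yields
\[ X_iX_m = X_i^2 X_j = X_j + X_m + \sum_{k \in N(i)} X_kX_j. \]
Reading off the coefficient of $X_m$ on each side, the left-hand side contributes $N_{im}^m = e_{im}$, while the right-hand side contributes $1$ (from the explicit $X_m$) plus the nonnegative quantity $\sum_{k \in N(i)} N_{kj}^m$. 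Since $e_{im} \in \{0,1\}$, this forces $e_{im} = 1$, that is, $i \sim m$.

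Finally, the hypotheses are symmetric in $i$ and $j$ (both are looped and $X_jX_i = X_m$ by commutativity), so the identical computation using the loop at $j$ gives $j \sim m$. But then $m \in N(i)\cap N(j)$, contradicting $N(i)\cap N(j) = \emptyset$. Hence no two looped vertices can lie at distance at least $3$, proving $d(i,j)\le 2$. Once the collapse $X_iX_j = X_m$ is in hand, the contradiction falls out purely from nonnegativity of the remaining coefficients, so the remaining work is routine bookkeeping of which coefficients vanish.
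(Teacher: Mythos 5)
Your proof is correct and is essentially the paper's argument in ring-element clothing: both isolate the unique hyperedge $\{i,j,m\}$ via $w_{ij}=1$, then use the loop at $i$ to force $i\sim m$ and symmetrically $j\sim m$, producing a common neighbor. Your comparison of the coefficient of $X_m$ in $X_i(X_iX_j)$ versus $(X_iX_i)X_j$ is exactly the paper's comparison of $(N_iN_j)_{mi}$ with $(N_iN_j)_{im}$, i.e., the symmetry of the commuting product of symmetric fusion matrices.
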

    \begin{proof}
      By way of contradiction, suppose that $i$ and $j$ are looped vertices in $G$ and $d(i,j) \geq 3$.   By Lemma
      \ref{forceloop},  $w_{ij} = 1$. Now let $\{i,j,k\}$ be the unique
      hyperedge containing $i$ and $j$ and  note that
      \begin{equation*}
        (N_iN_j)_{ik} = \sum_{u} N_{ii}^u N_{j u}^k \geq e_{ii}N_{j i}^k = 1,
      \end{equation*}
      and
      \begin{equation*}
        (N_iN_j)_{ki} = \sum_v N_{ik}^vN_{jv}^i = e_{ik}e_{ij} + e_{ki}N_{jk}^i + N_{ik}^j e_{ji} = e_{ki}
      \end{equation*}
      Thus, by the symmetry of $N_iN_j$ we must have the edge $\{i,k\}$ in $G$.
      Further, by comparing $(N_iN_j)_{jk}$ and $(N_iN_j)_{kj}$, $\{j,k\}$ must
      also be present, contradicting that $d(i,j) \geq 3$.  Thus either $i \sim j$ or $d(i,j) = 2$, as desired. 
    \end{proof}

    \begin{Acor}
      If $\mcC$ is a multiplcity-free, self-dual, braided fusion category such that 
     $\dim \Hom_{\mcC}\paren{X\otimes X,Y} = \dim \Hom_{\mcC}\paren{Y\otimes Y,X}$ for all distinct objects $X,Y \in \Irrstar\paren{\mcC}$, then
      for distinct subobjects $X, Y\in\Irrstar\paren{\mcC}$ such that $\dim \Hom_{\mcC}\paren{X\otimes X,X} = \dim \Hom_{\mcC}\paren{Y\otimes Y,Y} = 1$ either $Y$ is a subobject of $X\otimes
      X$, or $X\otimes X$ and $Y\otimes Y$ have a common subobject in $\Irrstar\paren{\mcC}$.
    \end{Acor}

    As a consequence of this result, there is only one non-simple
    component in graph $G$ which generates a fusion ring, and furthermore
    that component has diameter at most 2.  It might seem that the diameter
    2 constraint would significantly limit the number of simple graphs that need
    be considered as the base for the non-simple component in $G$, however
    a classic result of Erd\H{o}s and Reny\'{i} gives that a vanishingly
    small fraction of graphs have diameter greater than 2, see \cite{Bollobas:RandomGraphs}.

    \begin{lem}
      \label{loops} 
      Let $G$ be an undirected graph that generates a fusion ring.  If $G$
      contains an  induced path, $i \sim j \sim k$ where $\{i,j\}$
      is not in a triangle, then $i$ and $j$ both have self loops.
    \end{lem}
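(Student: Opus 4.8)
The plan is to show that the only scenario left open by part~(2) of Lemma~\ref{forceloop} is the desired one. Since $i \sim j$ and $\{i,j\}$ lies in no triangle, the vertices $i$ and $j$ share no common neighbor, so Lemma~\ref{forceloop}(2) applies and leaves exactly two possibilities: either $w_{ij}=0$, with a loop on precisely one of $i,j$, or $w_{ij}=1$, with a loop on both. It therefore suffices to rule out the case $w_{ij}=0$, which via Equation~\ref{nec2} (whose common-neighbor sum is empty here) is equivalent to showing that $i$ and $j$ do not carry exactly one loop between them.

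Assume toward a contradiction that $w_{ij}=0$, so no hyperedge contains both $i$ and $j$. As in Lemmas~\ref{neighbors}, \ref{L:mindeg}, and \ref{loopdist}, I would exploit that each $N_a$ is symmetric and that the $N_a$ commute, so that $N_iN_j$ is symmetric; the contradiction will come from comparing its $(i,k)$ and $(k,i)$ entries. For the $(i,k)$ entry I expand $(N_iN_j)_{ik} = \sum_u N_{ii}^u N_{ju}^k$ and single out the term $u=j$, which contributes $N_{ii}^j N_{jj}^k = e_{ij}\,e_{jk}=1$ because $i\sim j$ and $j\sim k$; every remaining summand is a product of nonnegative structure coefficients (the $u=0$ term being $\delta_{jk}=0$ since $j\neq k$), so $(N_iN_j)_{ik}\ge 1$.

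For the $(k,i)$ entry I expand $(N_iN_j)_{ki}=\sum_u N_{ik}^u N_{ju}^i$ and argue that the factor $N_{ju}^i$ is supported only on $u\in\{i,j\}$: indeed for $u\notin\{0,i,j,k\}$ it equals the indicator of the hyperedge $\{i,j,u\}$, which vanishes since $w_{ij}=0$, while the degenerate indices $u=0,k$ give $\delta_{ij}=0$ and $N_{jk}^i=0$ (again as $w_{ij}=0$ forbids $\{i,j,k\}$). On the surviving indices the companion factor $N_{ik}^u$ is zero: for $u=i$ because $N_{ik}^i=e_{ik}=0$ as $i\not\sim k$, and for $u=j$ because $N_{ik}^j$ is the indicator of $\{i,j,k\}$, which $w_{ij}=0$ excludes. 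Hence every term vanishes and $(N_iN_j)_{ki}=0$, contradicting $(N_iN_j)_{ik}\ge 1$ and the symmetry of $N_iN_j$. Therefore $w_{ij}=1$, and by Lemma~\ref{forceloop}(2) both $i$ and $j$ carry self loops.

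I expect the only delicate point to be the bookkeeping over the summation index $u$: one must correctly dispatch the degenerate cases $u\in\{0,i,j,k\}$ (using $N_{ii}^0=1$, $N_{j0}^k=\delta_{jk}$, and the edge indicators) and, crucially, invoke the standing assumption $w_{ij}=0$ in two distinct roles on the $(k,i)$ side — to discard the would-be hyperedge $\{i,j,k\}$ and to annihilate all hyperedges $\{i,j,u\}$ — so that the single guaranteed path contribution $e_{ij}e_{jk}=1$ on the $(i,k)$ side has nothing to match it. The role of the hypothesis $i\not\sim k$ is precisely to kill the remaining $u=i$ term, and the role of the third vertex $k$ on the path is to supply the forced $+1$.
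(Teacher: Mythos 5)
Your proof is correct and follows essentially the same route as the paper: reduce to showing a hyperedge contains both $i$ and $j$ and then invoke Lemma~\ref{forceloop}(2), with the forcing coming from the symmetry of a product of two commuting fusion matrices along the induced path. The only (immaterial) differences are that the paper compares $(N_iN_k)_{ij}$ with $(N_iN_k)_{ji}$ in a direct inequality chain, whereas you compare $(N_iN_j)_{ik}$ with $(N_iN_j)_{ki}$ and argue by contradiction from $w_{ij}=0$; your case analysis of the summation index is accurate.
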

    \begin{proof}
      By Lemma \ref{forceloop}, it suffices to show that there is a
      hyperedge containing $i$ and $j$.  To that end we note that
      \begin{equation*}
        1 = N_{ii}^jN_{kj}^j 
          \leq \sum_u N_{ii}^uN_{ku}^j 
          = (N_iN_k)_{ij} 
          = (N_iN_k)_{ji} 
          = \sum_v N_{ij}^vN_{kv}^i  
          \leq  e_{ji}N_{kj}^i + \sum_{v \neq i,j} N_{ij}^v 
          = N_{kj}^i + w_{ij}.
      \end{equation*}
      Thus there is a hyperedge containing both $i$ and $j$, as desired.
    \end{proof}

    \begin{Acor}
      Let $\mcC$ be a multiplicity-free, self-dual, braided fusion category such that  
     $\dim \Hom_{\mcC}\paren{X\otimes X,Y} = \dim \Hom_{\mcC}\paren{Y\otimes Y,X}$ for all distinct objects $X,Y \in \Irrstar(\mcC)$. If
      $X,Y,Z$ are distinct elements of $\Irrstar\paren{\mcC}$ such that $X$ and $Z$ are subobjects of $Y \otimes Y$ and $Z$ is not a subobject of $X \otimes X$, then either,
      $X\otimes X$ and $Y\otimes Y$ have a common subobject other
      than $\one$, $X$, and $Y$ or
      $\dim\Hom_{\mcC}\paren{X\otimes X,X}=\dim\Hom_{\mcC}\paren{Y\otimes
      Y,Y}=1$.
    \end{Acor}

    Note that this implies that any triangle-free component with at least 3
    vertices of a graph generating a fusion ring has a self-loop at every
    vertex.

\subsection{Fusion Rings Generated by Triangle-Free Graphs}
We now turn to the principle result of this section, a complete
characterization of the undirected, triangle-free graphs which
generate a fusion ring.  

 \begin{thm}\label{T:Tfree}
      The undirected, triangle-free graphs which generate fusion rings are: 
      \begin{enumerate}
        \item a single vertex with a self loop,
        \item a single edge with one self loop,
        \item a single edge with self loops on each vertex and an isolated
        loopless vertex, and
        \item a collection of $2^k-1$ isolated loopless vertices. 
      \end{enumerate}
    \end{thm}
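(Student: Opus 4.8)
The plan is to determine $G$ one component at a time. First I would eliminate entirely loopless parts: if $C$ were a nontrivial component of $G$ carrying no self-loop, then for any $v\in C$ (which has a neighbor, since $C$ is connected with at least two vertices) triangle-freeness makes $N(v)$ independent, so $C[N(v)]$ is edgeless and has minimum degree $0$, contradicting Lemma~\ref{neighbors}. Hence no nontrivial component is entirely loopless. By Lemma~\ref{loopdist} all looped vertices lie within pairwise distance $2$, so they occupy a single component $C_0$. Every other component then contains no loop, is entirely loopless, and by the previous sentence is trivial. Thus $G = C_0 \sqcup \{\text{isolated loopless vertices}\}$, where each isolated loopless vertex $u$ satisfies $X_uX_u=\one$ and so is invertible of dimension $1$, while each looped vertex $v$ satisfies $d_v^2 = 1+d_v+\sum_{u\sim v}d_u > 1+d_v$ and hence $d_v > \phi := (1+\sqrt5)/2$.

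\textbf{The small cases.} If $C_0$ is empty then $G$ is the empty graph and Theorem~\ref{emptyGraph} yields case~(4). Otherwise I split on $|C_0|$. When $|C_0|=1$, say $C_0=\{v\}$, the only relation is $X_vX_v=\one+X_v$ (the Fibonacci rules); if a stray isolated loopless $u$ existed, Lemma~\ref{forceloop}(1) gives $w_{uv}=1$, so $X_uX_v=X_w$ for a unique $w$, and then $(X_vX_v)X_u = X_u+X_w$ while $X_v(X_vX_u)=X_vX_w$ cannot contain $X_w$ (as $N_{vw}^{w}=N_{ww}^{v}=e_{wv}=0$ since $v$ is isolated in $C_0$), a contradiction; so $G$ is case~(1). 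When $|C_0|=2$ with edge $\{i,j\}$, the endpoints share no common neighbor, so Lemma~\ref{forceloop}(2) leaves two options. If exactly one vertex is looped we get $X_iX_i=\one+X_i+X_j$, $X_jX_j=\one+X_i$, $X_iX_j=X_i+X_j$, and the same associativity argument forbids further vertices, giving case~(2). If both are looped then $w_{ij}=1$, and $d_i^2=1+d_i+d_j=d_j^2$ forces $d_i=d_j=1+\sqrt2$; the unique hyperedge mate $p$ of $\{i,j\}$ then has $d_p=(d_i-1)(d_j-1)-1=1$, so $p$ is an isolated loopless vertex, and showing it is the only extra vertex gives case~(3).

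\textbf{Ruling out $|C_0|\ge 3$.} Assume $|C_0|\ge 3$. By the observation following Lemma~\ref{loops} every vertex of $C_0$ is looped, and since $G$ is triangle-free every edge lies in no triangle, so Lemma~\ref{forceloop}(2) gives $w_{xy}=1$ for each edge (a unique mate). Choose an induced path $j\sim x\sim k$ (available as $C_0$ is connected, triangle-free, with $\ge 3$ vertices) and let $m$ be the mate of $\{x,j\}$, so $X_xX_j=X_x+X_j+X_m$. Because each $N_a$ is symmetric (self-duality) and the $N_a$ commute, $N_jN_x$ is symmetric; comparing its $(j,k)$ and $(k,j)$ entries forces $m\notin\{x,j,k\}$ and $\{j,k,m\}\in H$, so $m$ is a common mate of $\{x,j\}$ and $\{j,k\}$. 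Crucially, $m$ cannot be invertible: were $X_m$ invertible (hence self-inverse), the hyperedges $\{x,j,m\}$ and $\{j,k,m\}$ would give $X_mX_x=X_j=X_mX_k$, whence $X_x=X_k$, contradicting $x\ne k$. Since every vertex outside $C_0$ is invertible, $m$ must lie in $C_0$, is looped, and is a fourth vertex.

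\textbf{The main obstacle.} The remaining and most delicate step is to turn this into a contradiction with finiteness. The same analysis run along the ever-larger paths created by the new mates forces an unbounded supply of distinct looped vertices — equivalently, one shows via an extremal $\FPdim$ argument that a maximal-dimension vertex of $C_0$ always admits an edge whose mate has strictly larger dimension — which is impossible for a finite fusion ring; making this growth argument airtight is where the real work lies, and I expect it to be the hard part of the proof. Once $|C_0|\le 2$ is secured, the classification above is complete, and the converse is routine: cases~(1)–(3) are the small rings exhibited explicitly, and case~(4) is supplied by Theorem~\ref{emptyGraph} via the Steiner-triple-system construction.
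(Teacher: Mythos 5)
Your reduction to a single looped component $C_0$ plus trivial vertices is sound, and your treatment of $|C_0|\le 2$ matches the paper in substance (though you assert rather than prove that no extra trivial vertices survive beyond the one forced vertex $p$; the paper closes this by noting that a second trivial vertex would force a hyperedge on two trivial vertices and a looped one, whence $(N_iN_j)_{ix}=1\neq 0=(N_iN_j)_{xi}$). The genuine gap is the case $|C_0|\ge 3$, which is the heart of the theorem and which you explicitly leave open. Your partial analysis there is correct as far as it goes: from an induced path $j\sim x\sim k$ you correctly extract a common hyperedge-mate $m\notin\{x,j,k\}$ of $\{x,j\}$ and of the non-adjacent pair $\{j,k\}$, and your invertibility argument (an invertible $X_m$ would make $X_mX_j$ simple yet contain both $X_x$ and $X_k$) correctly places $m$ inside $C_0$. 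But the proposed finiteness contradiction --- iterating to produce unboundedly many looped vertices, or an extremal Frobenius--Perron dimension argument --- is only a plan: nothing in what you wrote prevents the newly produced mates from revisiting previously constructed vertices, and no inequality is supplied that makes the dimensions strictly grow. You acknowledge this yourself, so the proof is incomplete precisely at the step that does the real work.

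For comparison, the paper closes this case without any induction or dimension estimates. It first proves two structural facts by direct commutator computations: (i) no hyperedge contains two edges of $T=C_0$, and (ii) the third vertex of any hyperedge containing an edge of $T$ lies \emph{outside} $T$. Note that (ii) is the exact opposite of what your invertibility argument gives for $m$, so had you established (ii) you would have been done on the spot --- the missing ingredient is exactly such a localization statement. The paper then takes non-adjacent $i,j\in T$ with a common neighbor $k$ (available since $T$ is triangle-free of diameter $2$ with at least $3$ vertices) and uses $(N_iN_j)_{ik}=(N_iN_j)_{ki}$ to produce a vertex $\ell$ lying in both hyperedges $\{i,k,\ell\}$ and $\{i,j,\ell\}$; by (ii) $\ell\notin T$, so $i$ and $\ell$ have no common neighbor, and Lemma \ref{forceloop} forces $w_{i\ell}=1$, contradicting $w_{i\ell}\ge 2$. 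You need to supply an argument of this kind (or genuinely complete your growth argument) before the classification is established.
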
 

    \begin{proof}
  Let $G$ be a triangle-free, undirected graph which generates a
  fusion ring.  In the case that $G$ is the empty graph, by Theorem \ref{emptyGraph}, $G$ has order $2^k-1$ and has a unique hypergraph $H$
      associated with which it generates a fusion ring. Thus we may
      assume that $G$ contains at least one edge and let $T$ be the maximal collection of nontrivial triangle-free components.
      By Lemma \ref{forceloop} every edge in $T$ has a self-loop on at least one
      of the vertices.  But then by Lemma \ref{loopdist} the diameter of $T$ is
      at most $4$ and in particular $T$ is a single component.  In particular, $G$ consists of $T$ and some number of trivial (that is, isolated without a self-loop) vertices.

Suppose now that $T$ contains at least 3 vertices.  Since $T$ is connected and contains at least $3$ vertices, for any edge $\{i,j\}$ in $T$, there exists some vertex $k$ such that either $i \sim k$ or $j \sim k$.  Since $T$ is triangle-free, by Lemma \ref{loops} both $i$ and $j$ have a self-loop.  As $\{i,j\}$ is an arbitrary edge, this implies that all vertices in $T$ have a self-loop and by Lemma \ref{loopdist} $T$ has diameter at most 2.  In fact, since $T$ is triangle-free and has at least three vertices, this implies that the diameter of $T$ is exactly 2.  Furthermore, by Lemma \ref{forceloop} for
      every edge $\{i,j\} \in E(T)$, $w_{ij} = 1$.  In particular,
      any hypergraph which, with $G$, generates a fusion ring is
      non-empty.  Let $H$ be one such hypergraph and let $f = \{i,j,k\}$ be an arbitrary edge in $H$.  Since $T$ is
      triangle-free, there are at most two edges of $T$ in $G[f]$.  Suppose then
      that there are precisely two edges from $T$ in $f$, namely $i \sim j$ and
      $j \sim k$.  Since $i,j,k$ are all in $T$, we have that
      $e_{ii}=e_{jj}=e_{kk}=1$ and, by Lemma \ref{forceloop}, $w_{ik} = w_{kj} =
      1$.  Furthermore, the hyperedge indicated by $w_{ij}$ and $w_{jk}$ is $f$.
      But now we have 
      \begin{equation*}
        (N_iN_j)_{ik} =\sum_{u}e_{iu}N_{ju}^k = e_{ii}N_{ji}^k+
        e_{ik}e_{kj}  + e_{ij}e_{jk} =
        2.
      \end{equation*}
      and
      \begin{equation*}
        (N_iN_j)_{ki} = \sum_v N_{ik}^vN_{jv}^i = e_{ki}N_{jk}^i  +
        e_{ik}e_{ij} + N_{ik}^j e_{ji} = 1,
      \end{equation*}
      which contradicts the symmetry of $N_iN_j$.  Thus, for any hyperedge $f$,
      $G[f]$ must contain at most one edge from $T$.

      We note that by Lemma \ref{forceloop} and the non-trivial nature of $T$,
      there exists some hyperedge $f$ such that $G[f]$ contains one
      edge of $T$.
      More specifically, suppose that $i$ and $j$ are adjacent vertices in $T$,
      then $w_{ij} = 1$.  Let $k$ be the third vertex in the unique hyperedge
      containing $i$ and $j$. Suppose $k$ lies in $T$.  Since $T$ is triangle
      free and has diameter 2, there must exist a vertex $x$ such that
      $i,j,x,k$ forms an induced a path of length 4 in $T$.  Now since $w_{xj} =
      1$ by Lemma \ref{forceloop}, we have that
      \begin{equation*}
        (N_xN_i)_{xj} = \sum_{u} e_{xu}N_{iu}^j = e_{xj}e_{ji}+e_{xk}N_{ik}^j + e_{xi}e_{ij} = 2,
      \end{equation*}
      and 
      \begin{equation*}
        (N_xN_i)_{jx} = \sum_{v} N_{xj}^vN_{iv}^x \leq  e_{xj}e_{xi} + e_{jx}N_{ij}^x + \sum_{s \neq j,x} N_{xj}^sN_{is}^x \leq 1,
      \end{equation*}
      again contradicting the symmetry of $N_xN_i$.  Thus for any hyperedge
      containing an edge in $T$, the third vertex in the hyperedge must not
      belong to $T$.

      Now suppose that $i$ and $j$ are two non-adjacent vertices in $T$.  Since
      $T$ is triangle free and has diameter 2, there is a vertex $k$ such that
      $i \sim k$ and $j \sim k$. But then we have that,
      \begin{equation*}
        1 = e_{ik}e_{jk} \leq \sum_u N_{ii}^uN_{ju}^k = (N_iN_j)_{ik} = (N_iN_j)_{ki} = \sum_v N_{ik}^vN_{jv}^i = \sum_{v \neq 0,i,j,k} N_{ik}^vN_{jv}^i.
      \end{equation*}
      Thus there is some vertex $\ell$ such that both $\{i,k,\ell\}$ and
      $\{i,j,\ell\}$ are hyperedges in $H$.  But since $i \sim k$, we know that
      $\ell$ is not in $T$, and in particular, $i$ and $\ell$ have no common
      neighbors.  Thus, by Lemma \ref{forceloop}, $w_{i\ell} = 1$, a
      contradiction.  Hence there are no non-adjacent vertices in $T$, and so
      $T$ has at most two vertices.
      That is, $T$ is a looped vertex or an edge
       with either one or two self-loops. 

      First suppose that $T$ is a looped vertex $x$. By Lemma \ref{forceloop}
      any trivial vertices must occur in exactly one hyperedge with $x$. 
      This would require two isolated vertices, $i$ and $j$, being contained in a hyperedge
      with a looped vertex.  However, this implies that $1 = (N_iN_j)_{ix} = (N_iN_j)_{xi} = 0$, a contradiction.  Thus, if $T$ is a single looped vertex, then $T$ is the entire graph $G$. Next,
      suppose that $T$ is an edge $\{x,y\}$ with at least one loop. Without loss
      of generality, first suppose that only $x$ is looped. Since $w_{xy} = 0$,
      additional trivial vertices must occur, in pairs, in exactly one hyperedge
      with $x$. Again we have that $T$ is the entire graph $G$.  Finally suppose
      that both $x$ and $y$ are looped.  By Lemma \ref{forceloop}, $w_{xy} = 1$
      and thus at least one trivial vertex must be present. However, if more
      than one trivial vertex is present, a hyperedge of the form $\{x,i,j\}$
      must also be present, where $i$ and $j$ are trivial vertices. Thus, such
      an edge must occur with exactly one trivial vertices. 
    \end{proof}

    From the fusion rules corresponding to the graphs in Theorem
    \ref{T:Tfree} it is easy verify the following theorem.

    \begin{thm}
      \label{Theorem: Main Result}
      The multiplicity-free, self-dual braided fusion categories generated by an
      undirected, triangle-free graph $G$ are the Grothendieck equivalent to one
      of the following:
      \begin{enumerate}
        \item $\Fib$
        \item $PSU(3)_2$,
        \item $PSU(2)_6$, and 
        \item $Rep(G)$ where $G$ is an elementary abelian 2-group of order $2^k$. 
      \end{enumerate}
    \end{thm}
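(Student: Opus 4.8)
The plan is to invoke Theorem~\ref{T:Tfree} to reduce the statement to a finite check and then, for each admissible graph, decode the associated fusion ring through the correspondence of Section~\ref{S:correspond} and recognize it as the Grothendieck ring of the named category. Theorem~\ref{T:Tfree} guarantees that a triangle-free undirected graph generating a fusion ring is one of exactly four types, and in each case the accompanying $3$-uniform hypergraph $H$ is forced by Equation~\ref{nec2} (equivalently Lemma~\ref{forceloop}); thus each graph determines a single fusion ring whose structure coefficients can be read off directly.

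First I would translate the combinatorial data into multiplication rules: the self-loops and edges supply the diagonal and edge coefficients exactly as prescribed by the correspondence, while the coefficients $N_{ij}^k$ with $i,j,k$ distinct are governed by the hyperedges, which Equation~\ref{nec2} pins down. Carrying this out yields:
\begin{enumerate}
  \item the single looped vertex gives the rank-$2$ ring $X_1^2 = \one + X_1$, the Fibonacci rule;
  \item the single edge with one loop gives, since $w_{12}=0$, the rank-$3$ ring $X_1^2 = \one + X_1 + X_2$, $X_2^2 = \one + X_1$, $X_1 X_2 = X_1 + X_2$;
  \item the single edge with two loops together with an isolated loopless vertex forces the unique hyperedge $\{1,2,3\}$ and yields the rank-$4$ ring with $X_3^2 = \one$, $X_1^2 = X_2^2 = \one + X_1 + X_2$, $X_1 X_2 = X_1 + X_2 + X_3$, $X_1 X_3 = X_2$, and $X_2 X_3 = X_1$;
  \item the empty graph on $2^k-1$ vertices gives the ring in which every non-identity basis element squares to $\one$.
\end{enumerate}

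The recognition step then matches each ring to the claimed Grothendieck ring. Case (1) is $\Fib$ by definition. Cases (2) and (3) are recognized as the Grothendieck rings of $\PSU(3)_2$ and $\PSU(2)_6$ respectively by matching the fusion rules above; the two are genuinely distinct since, for instance, the rank-$3$ ring has a generator of Frobenius-Perron dimension $2\cos(\pi/7)$ while the rank-$4$ ring has two objects of dimension $1+\sqrt{2}$. Case (4) is handled by the Algebraic Corollary to Theorem~\ref{emptyGraph}, which identifies the commutative fusion ring of order $2^k$ in which every non-identity element is an involution with the group ring of $\mbbZ_2^k$, that is, with $\Rep(\mbbZ_2^k)$. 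Since Grothendieck equivalence depends only on the fusion rules and each of the four rings is realized by a braided fusion category (the named one), these four categories exhaust the Grothendieck-equivalence classes.

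I expect the main obstacle to lie in the recognition step rather than in the decoding. One must have at hand a complete invariant (the multiplication table up to a relabeling fixing the unit) for $\PSU(3)_2$ and $\PSU(2)_6$ and verify the isomorphism of based rings, taking care that the matching respects the distinguished basis. The empty-graph case is already essentially contained in Theorem~\ref{emptyGraph}; the only remaining point there is the elementary observation that a commutative fusion ring in which every non-identity basis element squares to $\one$ is the group ring of an elementary abelian $2$-group.
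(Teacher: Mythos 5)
Your proposal is correct and follows essentially the same route as the paper, which simply observes that the theorem follows by reading off the fusion rules of the four graphs listed in Theorem~\ref{T:Tfree} and recognizing the resulting rings. Your write-up fills in exactly the decoding and recognition details the paper leaves implicit (and your decoded multiplication tables and FP-dimension checks are accurate).
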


\section{Low Rank Fusion Rings}\label{S:lowrank}
One of the advantages of the digraph representation of fusion rings is
that it allows for the use of already existing tools to facilitate
the complete enumeration.  For example, in this section we use the
utilities distributed McKay and Piperno's graph isomorphism tool
\texttt{nauty}/\texttt{Traces}~\cite{nauty} to generate all
non-isomorphic digraphs on a given number of vertices. Each digraph
then yields restrictions on the associated hypergraph through the
relationship between $e_{ij}$ and $w_{ij}$.  From simple combinatorial
considerations on the hypergraph, a significant fraction of digraphs can be
shown not to generate a fusion ring.  For the remaining digraphs, a direct
search through the associated hypergraphs will yield all digraph-hypergraph pairs which generate a fusion ring.  A complete table of
those pairs through rank 8 is provided below.  In addition to the fusion rings, we provide the Grothendieck class for
the fusion rings, if known.  To facilitate this procedure we make the
following observation:
\begin{obs}
If two digraphs, $D_1$ and $D_2$, generate fusion rings $F_1$ and
$F_2$, there is a digraph $D$ that is a super graph of $D_1 \times
D_2$ which generates $F_1 \boxtimes F_2$.  More concretely let $D_1'$ be
formed from $D_1$ by adding the vertex $0$ which has a self-loop and
directed edges from every vertex in $D_1$ to $0$.  Then $D = D_1'
\times D_2' - \{(0,0)\}$ and $D$ generates the fusion ring $F_1
\boxtimes F_2$.  
\end{obs}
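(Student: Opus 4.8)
The plan is to identify the digraph of the product fusion ring $F_1 \boxtimes F_2$ directly and recognize it as the claimed tensor product, after which the existence of a compatible hypergraph is automatic. Recall that $F_1 \boxtimes F_2$ is the fusion ring whose $\mbbZ_{+}$-basis is indexed by pairs $(a,b)$, with $a$ a basis index of $F_1$ and $b$ a basis index of $F_2$, whose unit is $(0,0)$, and whose structure coefficients factor as $N_{(a,b)(a',b')}^{(a'',b'')} = N_{aa'}^{a''} M_{bb'}^{b''}$, where $N$ and $M$ are the structure coefficients of $F_1$ and $F_2$; equivalently its fusion matrices are the Kronecker products $N_a \otimes M_b$. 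I would first record that this is a legitimate multiplicity-free, self-dual fusion ring: multiplicity-freeness is immediate since each factor lies in $\{0,1\}$, self-duality follows from $(a,b)^* = (a^*,b^*) = (a,b)$, and the Kronecker identity $(N_a \otimes M_b)(N_{a'} \otimes M_{b'}) = N_aN_{a'} \otimes M_bM_{b'}$ shows the matrices $\{N_a \otimes M_b\}$ form a commuting faithful representation, certifying commutativity and associativity. Consequently the digraph/hypergraph correspondence of Section~\ref{S:correspond} applies to $F_1 \boxtimes F_2$ and assigns it a well-defined digraph on its non-identity indices.

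Next I would repackage the digraph data as a single adjacency matrix. For a multiplicity-free, self-dual fusion ring with structure coefficients $N$, let $D'$ be the digraph on the \emph{full} index set $\{0,\ldots,r-1\}$ that retains the identity vertex; its adjacency matrix satisfies $\paren{A_{D'}}_{a,a'} = N_{aa}^{a'}$, since a self-loop at $a$ records $N_{aa}^a=1$ and a directed edge $(a,a')$ records $N_{aa}^{a'}=1$. Applied to $F_1$, this produces exactly the $D_1'$ of the statement: the vertex $0$ gains a self-loop (from $N_{00}^0=1$) and an in-edge $(v,0)$ from every $v$ (from $N_{vv}^0=1$ by self-duality), while no edge leaves $0$ because $N_{00}^{v}=\delta_{0v}$; the edges among the remaining vertices are precisely those of $D_1$. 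The same description gives $D_2'$.

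The heart of the argument is then a one-line Kronecker identity. Using the factorization of the structure coefficients, the adjacency matrix of the full digraph $D'$ of $F_1 \boxtimes F_2$ has entries
\[
\paren{A_{D'}}_{(a,b),(a',b')} = N_{(a,b)(a,b)}^{(a',b')} = N_{aa}^{a'}\,M_{bb}^{b'} = \paren{A_{D_1'}}_{a,a'}\paren{A_{D_2'}}_{b,b'},
\]
so $A_{D'} = A_{D_1'} \otimes A_{D_2'}$, which is the adjacency matrix of the tensor product $D_1' \times D_2'$. Since the unit of $F_1 \boxtimes F_2$ is $(0,0)$, the digraph on the non-identity indices furnished by the correspondence is obtained by deleting that single vertex, giving $D = (D_1' \times D_2') - \{(0,0)\}$. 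Thus $D$ is the digraph of $F_1 \boxtimes F_2$; taking $H$ to be the hypergraph recording the remaining coefficients $N_{(a,b)(a',b')}^{(a'',b'')}$ exhibits a pair $(D,H)$ generating $F_1 \boxtimes F_2$, so $D$ generates $F_1 \boxtimes F_2$ in the sense of Section~\ref{S:correspond}. Finally, $D \supseteq D_1 \times D_2$ because $D_1' \supseteq D_1$ and $D_2' \supseteq D_2$ while the deleted vertex $(0,0)$ lies outside $D_1 \times D_2$; the extra edges of $D$ all involve a coordinate equal to a unit.

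I expect the only genuinely delicate point to be the bookkeeping around the identity vertex: one must verify that $D_1'$ is literally the full digraph of $F_1$ (so that the displayed Kronecker equation is an identity of adjacency matrices over the \emph{full} index sets, not merely the non-identity parts), and that the correct deletion is of the single product-unit $(0,0)$ rather than of the larger family of ``mixed-unit'' vertices $(a,0)$ and $(0,b)$ — it is precisely these mixed-unit vertices that account for $D$ being a strict super-digraph of $D_1 \times D_2$. Everything else reduces to the multiplicativity of the structure coefficients and elementary properties of the Kronecker product.
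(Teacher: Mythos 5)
Your proposal is correct, and since the paper states this Observation without proof, your Kronecker-product argument supplies exactly the justification the authors evidently intend: the construction $D = D_1' \times D_2' - \{(0,0)\}$ is nothing but the factorization $N_{(a,b)(a,b)}^{(a',b')} = N_{aa}^{a'}M_{bb}^{b'}$ read as the identity $A_{D'} = A_{D_1'} \otimes A_{D_2'}$ of adjacency matrices over the full index sets. Your attention to the bookkeeping — that $D_1'$ is literally the full digraph of $F_1$ (self-loop at $0$ from $N_{00}^0$, in-edges $(v,0)$ from $N_{vv}^0 = \delta_{v^*v} = 1$ by self-duality, no out-edges since $N_{00}^v = \delta_{0v}$), and that only the product unit $(0,0)$ is deleted while the mixed-unit vertices $(a,0)$, $(0,b)$ survive and account for $D$ strictly containing $D_1 \times D_2$ — is precisely the point that makes the statement an identity rather than merely an inclusion.
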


\begin{rem}
It is perhaps interesting to note that in the tables below, the
tuple \[\left(\sum_i N_{ii}^i,  \sum_i \sum_{j \neq i} N_{ii}^j,
  \sum_{\size{\{i,j,k\}}=3} N_{ij}^k\right)\] uniquely determines the fusion
ring.  Furthermore, in examining the rank 5 fusion rings we see that
the more algebraically natural tuples $\left(\sum_i \Tr{N_i},
  \sum_{\size{\{i,j,k\}}=3} N_{ij}^k\right)$ or $\left( \sum_i
  \Tr{N_i}, \sum_{i,j,k} N_{i,j}^k\right)$ fail to distinguish between
all fusion rings.  We are not so bold as to conjecture that this holds
in general, but it may be a fruitful direction of future research.  
\end{rem}

\subsection*{Acknowledgments}
The last author would like to thank Tobias Hagge for his help in formulating the algebraic corollaries as well as Gert Vercleyen and S\'ebastien Palcoux for their helpful pointers to more recent results on the enumeration of fusion rings and categories.

\begin{longtable}{|c| M | M | c|}
\caption{Rank 2 Fusion Rings\label{T:Rank2}}\\
\hline
Loops & Arcs & Hyperedges & Grothendick Class \\ \hline
 \endfirsthead
\caption{Rank 3 Fusion Rings (continued)}\\ \hline
Loops & Arcs & Hyperedges & Grothendick Class \\ \hline
\endhead
& & & $\Sem$\cite{RSW}\\\hline
$1$ & && $\Fib$\cite{RSW}\\\hline
\end{longtable}

\begin{longtable}{|c| M | M | c|}
\caption{Rank 3 Fusion Rings\label{T:Rank3}}\\
\hline
Loops & Arcs & Hyperedges & Grothendick Class \\ \hline
 \endfirsthead
\caption{Rank 3 Fusion Rings (continued)}\\ \hline
Loops & Arcs & Hyperedges & Grothendick Class \\ \hline
\endhead
&$(1, 2)$ && $\Ising$\cite{O3}\\\hline
$1$ &$(1, 2)$ && $\Rep\paren{\mfS_{3}}$\cite{O3}\\\hline
$1$ &$(1, 2)$, $(2, 1)$ && $\PSU(3)_2$ \\ \hline
\end{longtable}

\begin{longtable}{|c| M | M | c|}
\caption{Rank 4 Fusion Rings\label{T:Rank4}}\\
\hline
Loops & Arcs & Hyperedges & Grothendick Class \\ \hline
 \endfirsthead
\caption{Rank 4 Fusion Rings (continued)}\\ \hline
Loops & Arcs & Hyperedges & Grothendick Class \\ \hline
\endhead
&&$(1, 2, 3)$ & $\Sem^{2}$\cite{RSW}\\\hline
&$(1, 2)$, $(1, 3)$, $(2, 1)$, $(2, 3)$ && $\PSO(5)_6$\cite{B1}\\ \hline
$2$ &$(1, 2)$ &$(1, 2, 3)$ & $\Sem\boxtimes\Fib$\cite{RSW}\\\hline
$1$, $2$ &$(1, 2)$, $(2, 1)$ &$(1, 2, 3)$ & $\PSU(2)_6$\cite{B1}\\ \hline
$1$, $2$ &$(1, 2)$, $(1, 3)$, $(2, 1)$, $(3, 2)$ &$(1, 2, 3)$ & $\PSU(2)_5$\cite{B1} \\ \hline
$1$, $2$, $3$ &$(1, 2)$, $(1, 3)$ &$(1, 2, 3)$ & $\Fib^2$ \cite{RSW}\\ \hline
\end{longtable}

\begin{longtable}{|c| M | M | c|}
\caption{Rank 5 Fusion Rings\label{T:Rank5}}\\
\hline
Loops & Arcs & Hyperedges & Grothendick Class \\ \hline
 \endfirsthead
\caption{Rank 5 Fusion Rings (continued)}\\ \hline
Loops & Arcs & Hyperedges & Grothendick Class \\ \hline
\endhead
&$(1, 2)$, $(1, 3)$, $(1, 4)$ &$(2, 3, 4)$ &  $\mathrm{TY}(\nicefrac{\Z}{2\Z} \times \nicefrac{\Z}{2\Z},\chi,\nu)$ \\ \hline
&$(1, 3)$, $(1, 4)$, $(2, 1)$, $(2, 4)$, $(3, 2)$, $(3, 4)$ &$(1, 2, 3)$ &  $\mathcal{C}(\so_{7},e^{\nicefrac{\pi i}{14}},14)_{ad}$\\ \hline
$1$ &$(1, 2)$, $(1, 3)$, $(1, 4)$ &$(2, 3, 4)$ &  not categorifiable\cite{ENO1}\\ \hline
$1$ &$(1, 4)$, $(2, 1)$, $(3, 1)$ &$(1, 2, 3)$, $(2, 3, 4)$ &  $\mathcal{C}(\so_{3},e^{\nicefrac{\pi i}{6}},6)$\\ \hline
$1$, $3$ &$(1, 4)$, $(2, 1)$, $(2, 3)$, $(3, 1)$ &$(1, 2, 3)$, $(2, 3, 4)$ & \\ \hline
$1$, $3$ &$(1, 2)$, $(1, 3)$, $(1, 4)$, $(2, 1)$, $(2, 3)$, $(3, 1)$ &$(1, 2, 3)$, $(2, 3, 4)$ & \\ \hline
$2$, $3$ &$(1, 2)$, $(1, 3)$, $(1, 4)$, $(2, 1)$, $(2, 3)$, $(3, 1)$, $(3, 2)$ &$(1, 2, 3)$, $(2, 3, 4)$ & \\ \hline
$1$, $2$, $3$ &$(1, 2)$, $(1, 3)$, $(1, 4)$, $(2, 1)$, $(2, 3)$, $(3, 1)$, $(4, 3)$ &$(1, 2, 3)$, $(1, 2, 4)$, $(2, 3, 4)$ & \\ \hline
$1$, $2$, $3$ &$(1, 4)$, $(2, 1)$, $(2, 3)$, $(3, 1)$, $(3, 2)$ &$(1, 2, 3)$, $(2, 3, 4)$ & \\ \hline
$1$, $2$, $3$, $4$ &$(1, 2)$, $(1, 3)$, $(1, 4)$, $(2, 4)$, $(3, 2)$, $(4, 3)$ &$(1, 2, 3)$, $(1, 2, 4)$, $(1, 3, 4)$ & \\ \hline
\end{longtable}

\begin{longtable}{|c| M | M | c|}
\caption{Rank 6 Fusion Rings\label{T:Rank6}}\\
\hline
Loops & Arcs & Hyperedges & Grothendick Class \\ \hline
 \endfirsthead
\caption{Rank 6 Fusion Rings (continued)}\\ \hline
Loops & Arcs & Hyperedges & Grothendick Class \\ \hline
\endhead
&$(1, 5)$, $(2, 5)$ &$(1, 2, 3)$, $(1, 2, 4)$, $(3, 4, 5)$ & 
$\Sem\boxtimes\Ising$ \cite{RSW}\\\hline
&$(1, 2)$, $(1, 5)$, $(2, 1)$, $(2, 5)$, $(3, 1)$, $(3, 2)$, $(4, 1)$, $(4, 2)$ &$(1, 3, 4)$, $(2, 3, 4)$, $(3, 4, 5)$ & \\ \hline
$2$ &$(1, 2)$, $(1, 5)$, $(2, 5)$ &$(1, 2, 3)$, $(1, 2, 4)$, $(3, 4, 5)$ & 
$\Sem\boxtimes \Rep\paren{\mfS_{3}}$\cite{RSW,O3}\\\hline
$4$ &$(1, 3)$, $(1, 4)$, $(1, 5)$, $(2, 5)$, $(3, 4)$ &$(1, 2, 3)$, $(1, 2, 4)$,
$(3, 4, 5)$ & $\Fib\boxtimes\Ising$\cite{RSW}\\\hline
$2$ &$(1, 2)$, $(1, 4)$, $(2, 4)$, $(3, 2)$, $(4, 2)$ &$(1, 2, 3)$, $(1, 2, 5)$, $(1, 3, 4)$, $(3, 4, 5)$ & $\Sem\boxtimes\PSU\paren{3}_{2}$\cite{RSW}\\\hline
$1$ &$(1, 5)$, $(2, 4)$, $(2, 5)$, $(3, 2)$, $(3, 5)$, $(4, 3)$, $(4, 5)$ &$(1, 2, 3)$, $(1, 2, 4)$, $(1, 3, 4)$ & \\ \hline 
$2$ &$(1, 2)$, $(1, 3)$, $(1, 4)$, $(2, 3)$, $(2, 4)$, $(3, 4)$, $(3, 5)$, $(4, 3)$, $(4, 5)$ &$(1, 2, 3)$, $(1, 2, 4)$, $(1, 2, 5)$ & \\ \hline
$1$ &$(1, 5)$, $(2, 3)$, $(2, 4)$, $(2, 5)$, $(3, 2)$, $(3, 4)$, $(3, 5)$, $(4, 2)$, $(4, 3)$, $(4, 5)$ &$(1, 2, 3)$, $(1, 2, 4)$, $(1, 3, 4)$ & \\ \hline
$1$ &$(1, 2)$, $(1, 3)$, $(1, 4)$, $(1, 5)$, $(2, 3)$, $(2, 4)$, $(2, 5)$, $(3, 2)$, $(3, 4)$, $(3, 5)$, $(4, 2)$, $(4, 3)$, $(4, 5)$, $(5, 2)$, $(5, 3)$, $(5, 4)$ &$(1, 2, 3)$, $(1, 2, 4)$, $(1, 2, 5)$, $(1, 3, 4)$, $(1, 3, 5)$, $(1, 4, 5)$ & \\ \hline
$1$, $2$ &$(1, 2)$, $(1, 5)$, $(2, 1)$, $(2, 5)$ &$(1, 2, 3)$, $(1, 2, 4)$, $(3, 4, 5)$ & \\ \hline
$1$, $2$ &$(1, 2)$, $(1, 4)$, $(2, 1)$, $(2, 4)$, $(3, 1)$, $(3, 2)$, $(4, 1)$, $(4, 2)$ &$(1, 2, 3)$, $(1, 2, 5)$, $(1, 3, 4)$, $(2, 3, 4)$, $(3, 4, 5)$ & \\ \hline
$1$, $2$ &$(1, 2)$, $(1, 3)$, $(1, 4)$, $(2, 1)$, $(2, 3)$, $(2, 4)$, $(3, 4)$, $(3, 5)$, $(4, 3)$, $(4, 5)$ &$(1, 2, 3)$, $(1, 2, 4)$, $(1, 2, 5)$ & \\ \hline 
$1$, $2$, $4$ &$(1, 2)$, $(1, 3)$, $(1, 4)$, $(1, 5)$, $(2, 5)$, $(3, 4)$ &$(1, 2, 3)$, $(1, 2, 4)$, $(3, 4, 5)$ & $\Fib \boxtimes \Rep\paren{\mfS_{3}}$\cite{O3}\\ \hline
$1$, $3$, $4$ &$(1, 2)$, $(1, 3)$, $(1, 4)$, $(1, 5)$, $(2, 1)$, $(2, 3)$, $(2, 4)$, $(3, 5)$, $(5, 3)$ &$(1, 2, 3)$, $(1, 2, 5)$, $(1, 3, 4)$, $(2, 4, 5)$ & $\Fib \boxtimes \PSU(3)_2$ \cite{O3}\\ \hline 
$1$, $2$, $4$ &$(1, 2)$, $(1, 3)$, $(1, 4)$, $(2, 1)$, $(2, 3)$, $(2, 4)$, $(3, 2)$, $(3, 4)$, $(4, 2)$ &$(1, 2, 3)$, $(1, 2, 4)$, $(1, 2, 5)$, $(1, 3, 4)$, $(3, 4, 5)$ & \\ \hline
$1$, $2$, $4$ &$(1, 2)$, $(1, 3)$, $(1, 4)$, $(1, 5)$, $(2, 1)$, $(2, 3)$, $(2, 4)$, $(3, 1)$, $(3, 2)$, $(3, 4)$, $(4, 2)$, $(5, 4)$ &$(1, 2, 3)$, $(1, 2, 4)$, $(1, 2, 5)$, $(1, 3, 4)$, $(2, 3, 5)$, $(3, 4, 5)$ & \\ \hline
$1$, $2$, $3$, $4$ &$(1, 5)$, $(2, 5)$, $(3, 5)$, $(4, 5)$ &$(1, 2, 3)$, $(1, 2, 4)$, $(1, 3, 4)$, $(2, 3, 4)$ & \\ \hline
$1$, $2$, $3$, $4$ &$(1, 5)$, $(2, 4)$, $(2, 5)$, $(3, 2)$, $(3, 5)$, $(4, 3)$, $(4, 5)$ &$(1, 2, 3)$, $(1, 2, 4)$, $(1, 3, 4)$, $(2, 3, 4)$ & \\ \hline
$1$, $2$, $3$, $4$ &$(1, 3)$, $(1, 4)$, $(1, 5)$, $(2, 1)$, $(2, 5)$, $(3, 1)$, $(3, 2)$, $(3, 4)$, $(4, 1)$, $(4, 2)$, $(4, 3)$ &$(1, 2, 3)$, $(1, 2, 4)$, $(1, 3, 4)$, $(2, 3, 4)$, $(3, 4, 5)$ & \\ \hline
$1$, $2$, $4$, $5$ &$(1, 2)$, $(1, 3)$, $(1, 4)$, $(1, 5)$, $(2, 3)$, $(2, 5)$, $(3, 2)$, $(3, 4)$, $(3, 5)$, $(4, 2)$, $(4, 3)$, $(5, 3)$, $(5, 4)$ &$(1, 2, 3)$, $(1, 2, 4)$, $(1, 2, 5)$, $(1, 3, 4)$, $(1, 3, 5)$, $(1, 4, 5)$, $(2, 4, 5)$ & \\ \hline
\end{longtable}

\begin{longtable}{|c| M | M | c|}
\caption{Rank 7 Fusion Rings\label{T:Rank7}}\\
\hline
Loops & Arcs & Hyperedges & Grothendick Class \\ \hline
 \endfirsthead
\caption{Rank 7 Fusion Rings (continued)}\\ \hline
Loops & Arcs & Hyperedges & Grothendick Class \\ \hline
\endhead
&$(1, 4)$, $(1, 5)$, $(1, 6)$, $(2, 1)$, $(2, 6)$, $(3, 1)$, $(3, 6)$ &$(1, 2, 3)$, $(2, 3, 4)$, $(2, 3, 5)$, $(4, 5, 6)$ & \\ \hline
&$(1, 5)$, $(1, 6)$, $(2, 1)$, $(2, 6)$, $(3, 2)$, $(3, 6)$, $(4, 3)$, $(4, 6)$, $(5, 4)$, $(5, 6)$ &$(1, 2, 4)$, $(1, 3, 4)$, $(1, 3, 5)$, $(2, 3, 5)$, $(2, 4, 5)$ & \\ \hline
&$(1, 3)$, $(1, 4)$, $(1, 5)$, $(2, 3)$, $(2, 4)$, $(2, 5)$, $(3, 5)$, $(3, 6)$, $(4, 3)$, $(4, 6)$, $(5, 4)$, $(5, 6)$ &$(1, 2, 3)$, $(1, 2, 4)$, $(1, 2, 5)$, $(1, 2, 6)$, $(3, 4, 5)$ & \\ \hline
$3$ &$(1, 4)$, $(1, 5)$, $(1, 6)$, $(2, 1)$, $(2, 3)$, $(2, 6)$, $(3, 1)$, $(3, 6)$ &$(1, 2, 3)$, $(2, 3, 4)$, $(2, 3, 5)$, $(4, 5, 6)$ & \\ \hline
$1$ &$(1, 2)$, $(1, 3)$, $(1, 4)$, $(1, 5)$, $(1, 6)$, $(2, 1)$, $(2, 6)$, $(3, 1)$, $(3, 6)$ &$(1, 2, 3)$, $(2, 3, 4)$, $(2, 3, 5)$, $(4, 5, 6)$ & \\ \hline 
$3$ &$(1, 2)$, $(1, 3)$, $(1, 4)$, $(1, 5)$, $(1, 6)$, $(2, 1)$, $(2, 3)$, $(2, 6)$, $(3, 1)$, $(3, 6)$ &$(1, 2, 3)$, $(2, 3, 4)$, $(2, 3, 5)$, $(4, 5, 6)$ & \\ \hline
$2$ &$(1, 2)$, $(1, 3)$, $(1, 4)$, $(1, 5)$, $(2, 3)$, $(2, 4)$, $(2, 5)$, $(3, 5)$, $(3, 6)$, $(4, 3)$, $(4, 6)$, $(5, 4)$, $(5, 6)$ &$(1, 2, 3)$, $(1, 2, 4)$, $(1, 2, 5)$, $(1, 2, 6)$, $(3, 4, 5)$ & \\ \hline
$2$, $3$ &$(1, 2)$, $(1, 3)$, $(1, 6)$, $(2, 3)$, $(3, 2)$, $(4, 3)$, $(5, 3)$ &$(1, 2, 4)$, $(1, 2, 5)$, $(1, 3, 4)$, $(1, 3, 5)$, $(2, 3, 6)$, $(2, 4, 5)$, $(4, 5, 6)$ & \\ \hline
$2$, $3$ &$(1, 4)$, $(1, 5)$, $(1, 6)$, $(2, 1)$, $(2, 3)$, $(2, 6)$, $(3, 1)$, $(3, 2)$, $(3, 6)$ &$(1, 2, 3)$, $(2, 3, 4)$, $(2, 3, 5)$, $(4, 5, 6)$ & \\ \hline
$1$, $5$ &$(1, 6)$, $(2, 1)$, $(2, 4)$, $(2, 5)$, $(3, 1)$, $(3, 4)$, $(3, 5)$, $(4, 1)$, $(4, 3)$, $(4, 5)$, $(5, 1)$, $(5, 3)$ &$(1, 2, 3)$, $(1, 4, 5)$, $(2, 3, 4)$, $(2, 3, 5)$, $(2, 3, 6)$, $(2, 4, 5)$, $(4, 5, 6)$ & \\ \hline
$1$, $2$ &$(1, 2)$, $(1, 3)$, $(1, 4)$, $(1, 5)$, $(2, 1)$, $(2, 3)$, $(2, 4)$, $(2, 5)$, $(3, 5)$, $(3, 6)$, $(4, 3)$, $(4, 6)$, $(5, 4)$, $(5, 6)$ &$(1, 2, 3)$, $(1, 2, 4)$, $(1, 2, 5)$, $(1, 2, 6)$, $(3, 4, 5)$ & \\ \hline
$3$, $4$, $5$ &$(1, 2)$, $(1, 3)$, $(1, 4)$, $(1, 5)$, $(1, 6)$, $(2, 3)$, $(2, 5)$, $(3, 5)$, $(4, 5)$, $(5, 4)$ &$(1, 2, 4)$, $(1, 2, 5)$, $(1, 3, 4)$, $(1, 3, 5)$, $(2, 3, 4)$, $(2, 3, 6)$, $(4, 5, 6)$ & \\ \hline
$1$, $2$, $3$ &$(1, 6)$, $(2, 1)$, $(2, 3)$, $(2, 4)$, $(2, 5)$, $(3, 1)$, $(3, 2)$, $(3, 4)$, $(3, 5)$, $(4, 1)$, $(4, 3)$, $(5, 1)$, $(5, 3)$ &$(1, 2, 3)$, $(1, 4, 5)$, $(2, 3, 4)$, $(2, 3, 5)$, $(2, 3, 6)$, $(2, 4, 5)$, $(4, 5, 6)$ & \\ \hline
$1$, $2$, $3$, $5$ &$(1, 2)$, $(1, 3)$, $(1, 4)$, $(1, 5)$, $(1, 6)$, $(2, 1)$, $(2, 3)$, $(2, 5)$, $(3, 1)$, $(3, 2)$, $(3, 5)$, $(4, 3)$, $(4, 5)$, $(5, 3)$ &$(1, 2, 3)$, $(1, 2, 4)$, $(1, 2, 5)$, $(1, 3, 4)$, $(1, 3, 5)$, $(2, 3, 4)$, $(2, 3, 6)$, $(2, 4, 5)$, $(4, 5, 6)$ & \\ \hline
$1$, $2$, $3$, $5$ &$(1, 2)$, $(1, 3)$, $(1, 4)$, $(1, 5)$, $(1, 6)$, $(2, 1)$, $(2, 3)$, $(2, 4)$, $(2, 5)$, $(3, 1)$, $(3, 2)$, $(3, 5)$, $(4, 1)$, $(4, 3)$, $(4, 5)$, $(5, 3)$, $(6, 5)$ &$(1, 2, 3)$, $(1, 2, 4)$, $(1, 2, 5)$, $(1, 2, 6)$, $(1, 3, 4)$, $(1, 3, 5)$, $(2, 3, 4)$, $(2, 3, 6)$, $(2, 4, 5)$, $(3, 4, 6)$, $(4, 5, 6)$ & \\ \hline
$1$, $2$, $3$, $5$ &$(1, 2)$, $(1, 3)$, $(1, 4)$, $(1, 6)$, $(2, 1)$, $(2, 3)$, $(2, 4)$, $(2, 5)$, $(3, 1)$, $(3, 2)$, $(3, 4)$, $(3, 5)$, $(4, 2)$, $(4, 3)$, $(4, 5)$, $(4, 6)$, $(5, 4)$, $(5, 6)$ &$(1, 2, 3)$, $(1, 2, 4)$, $(1, 2, 5)$, $(1, 3, 4)$, $(1, 3, 5)$, $(1, 4, 5)$, $(2, 3, 4)$, $(2, 3, 5)$, $(2, 3, 6)$ & \\ \hline
$1$, $2$, $3$, $4$, $5$ &$(1, 4)$, $(1, 5)$, $(1, 6)$, $(2, 1)$, $(2, 3)$, $(2, 4)$, $(2, 5)$, $(3, 1)$, $(3, 2)$, $(3, 4)$, $(3, 5)$, $(4, 3)$, $(4, 5)$, $(5, 3)$, $(5, 4)$ &$(1, 2, 3)$, $(1, 2, 4)$, $(1, 2, 5)$, $(1, 3, 4)$, $(1, 3, 5)$, $(2, 3, 4)$, $(2, 3, 5)$, $(2, 3, 6)$, $(2, 4, 5)$, $(4, 5, 6)$ & \\ \hline
$1$, $2$, $3$, $4$, $5$, $6$ &$(1, 2)$, $(1, 3)$, $(1, 4)$, $(1, 5)$, $(1, 6)$, $(2, 3)$, $(2, 4)$, $(2, 6)$, $(3, 2)$, $(3, 5)$, $(3, 6)$, $(4, 2)$, $(4, 3)$, $(4, 5)$, $(5, 2)$, $(5, 4)$, $(5, 6)$, $(6, 3)$, $(6, 4)$, $(6, 5)$ &$(1, 2, 3)$, $(1, 2, 4)$, $(1, 2, 5)$, $(1, 2, 6)$, $(1, 3, 4)$, $(1, 3, 5)$, $(1, 3, 6)$, $(1, 4, 5)$, $(1, 4, 6)$, $(1, 5, 6)$, $(2, 3, 5)$, $(2, 4, 6)$, $(2, 5, 6)$, $(3, 4, 5)$, $(3, 4, 6)$ & \\ \hline
\end{longtable}
\hide{
\begin{longtable}{|c| M | M | c|}
\caption{Rank 8 Fusion Rings\label{T:Rank8}}\\
\hline
Loops & Arcs & Hyperedges & Grothendick Class \\ \hline
 \endfirsthead
\caption{Rank 8 Fusion Rings (continued)}\\ \hline
Loops & Arcs & Hyperedges & Grothendick Class \\ \hline
\endhead

&&& $\Fib^3$ \cite{RSW}\\ \hline
&&& $\Fib \boxtimes \PSO(5)_6$\cite{RSW,B1}\\ \hline
&&& $\Fib \boxtimes \PSU(2)_5$\cite{RSW,B1}\\ \hline
&&$(1, 2, 3)$, $(1, 4, 5)$, $(1, 6, 7)$, $(2, 4, 6)$, $(2, 5, 7)$, $(3, 4, 7)$,
$(3, 5, 6)$ & $\Sem^{3}$\cite{RSW}\\\hline

&$(1, 4)$, $(1, 7)$, $(2, 4)$, $(2, 7)$, $(3, 2)$, $(3, 7)$, $(4, 2)$, $(4, 7)$ &$(1, 2, 3)$, $(1, 2, 5)$, $(1, 2, 6)$, $(1, 3, 4)$, $(3, 4, 5)$, $(3, 4, 6)$, $(5, 6, 7)$ & $\Sem\boxtimes \PSO(5)_6$ \cite{RSW,B1}\\ \hline

&$(1, 3)$, $(1, 4)$, $(1, 5)$, $(1, 6)$, $(2, 3)$, $(2, 4)$, $(2, 5)$, $(2, 6)$, $(3, 2)$, $(3, 5)$, $(3, 6)$, $(4, 2)$, $(4, 5)$, $(4, 6)$, $(5, 6)$, $(5, 7)$, $(6, 5)$, $(6, 7)$ &$(1, 2, 3)$, $(1, 2, 4)$, $(1, 2, 5)$, $(1, 2, 6)$, $(1, 2, 7)$, $(1, 3, 4)$, $(3, 4, 5)$, $(3, 4, 6)$, $(3, 4, 7)$ & \\ \hline

$4$ &$(1, 4)$, $(2, 4)$, $(3, 4)$ &$(1, 2, 3)$, $(1, 2, 5)$, $(1, 3, 6)$, $(1, 4, 7)$, $(2, 3, 7)$, $(2, 4, 6)$, $(3, 4, 5)$, $(5, 6, 7)$ & $\Fib\boxtimes\Sem^{2}$\cite{RSW}\\\hline

$4$ &$(1, 3)$, $(1, 4)$, $(1, 6)$, $(2, 3)$, $(2, 4)$, $(2, 6)$, $(3, 4)$, $(3, 7)$, $(4, 7)$ &$(1, 2, 3)$, $(1, 2, 4)$, $(1, 2, 5)$, $(1, 2, 7)$, $(3, 4, 5)$, $(3, 4, 6)$, $(5, 6, 7)$ & \\ \hline

$3$ &$(1, 3)$, $(1, 4)$, $(1, 5)$, $(1, 6)$, $(2, 3)$, $(2, 4)$, $(2, 5)$, $(2, 6)$, $(3, 7)$, $(4, 6)$, $(4, 7)$, $(5, 4)$, $(5, 7)$, $(6, 5)$, $(6, 7)$ &$(1, 2, 3)$, $(1, 2, 4)$, $(1, 2, 5)$, $(1, 2, 6)$, $(1, 2, 7)$, $(3, 4, 5)$, $(3, 4, 6)$, $(3, 5, 6)$ & \\ \hline

$3$, $4$ &$(1, 3)$, $(1, 4)$, $(2, 3)$, $(2, 4)$, $(3, 4)$, $(4, 3)$ &$(1, 2, 3)$, $(1, 2, 4)$, $(1, 2, 5)$, $(1, 3, 6)$, $(1, 4, 7)$, $(2, 3, 7)$, $(2, 4, 6)$, $(3, 4, 5)$, $(5, 6, 7)$ & $\Sem\boxtimes\PSU(2)_{6}$\cite{RSW,B1}\\\hline

$2$, $4$ &$(1, 2)$, $(1, 3)$, $(1, 4)$, $(1, 6)$, $(2, 3)$, $(2, 4)$, $(2, 6)$, $(3, 4)$, $(3, 7)$, $(4, 7)$ &$(1, 2, 3)$, $(1, 2, 4)$, $(1, 2, 5)$, $(1, 2, 7)$, $(3, 4, 5)$, $(3, 4, 6)$, $(5, 6, 7)$ & \\ \hline

$2$, $4$ &$(1, 2)$, $(1, 4)$, $(1, 6)$, $(2, 4)$, $(2, 6)$, $(3, 2)$, $(3, 4)$, $(4, 2)$, $(5, 4)$, $(6, 4)$ &$(1, 2, 3)$, $(1, 2, 5)$, $(1, 2, 7)$, $(1, 3, 4)$, $(1, 3, 6)$, $(1, 4, 5)$, $(2, 3, 5)$, $(2, 4, 6)$, $(3, 4, 7)$, $(3, 5, 6)$, $(5, 6, 7)$ & $\Sem\boxtimes\PSU(2)_{5}$\cite{RSW,B1}\\\hline

$1$, $4$ &$(1, 3)$, $(1, 4)$, $(1, 7)$, $(2, 1)$, $(2, 3)$, $(2, 4)$, $(2, 7)$, $(3, 2)$, $(3, 4)$, $(3, 7)$, $(4, 2)$, $(4, 7)$ &$(1, 2, 3)$, $(1, 2, 4)$, $(1, 2, 5)$, $(1, 2, 6)$, $(1, 3, 4)$, $(3, 4, 5)$, $(3, 4, 6)$, $(5, 6, 7)$ & \\ \hline

$2$, $4$, $6$ &$(1, 2)$, $(1, 4)$, $(1, 6)$, $(2, 4)$, $(2, 6)$, $(3, 4)$, $(5, 6)$ &$(1, 2, 3)$, $(1, 2, 5)$, $(1, 2, 7)$, $(1, 3, 6)$, $(1, 4, 5)$, $(2, 3, 5)$, $(2, 4, 6)$, $(3, 4, 7)$, $(5, 6, 7)$ & $\Fib^2\boxtimes\Sem$\cite{RSW}\\\hline

$2$, $3$, $4$ &$(1, 2)$, $(1, 3)$, $(1, 4)$, $(2, 3)$, $(2, 4)$, $(3, 2)$, $(3, 4)$, $(4, 2)$, $(4, 3)$ &$(1, 2, 3)$, $(1, 2, 4)$, $(1, 2, 5)$, $(1, 3, 4)$, $(1, 3, 6)$, $(1, 4, 7)$, $(2, 3, 7)$, $(2, 4, 6)$, $(3, 4, 5)$, $(5, 6, 7)$ & \\ \hline

$1$, $2$, $4$ &$(1, 2)$, $(1, 3)$, $(1, 4)$, $(1, 6)$, $(2, 1)$, $(2, 3)$, $(2, 4)$, $(2, 6)$, $(3, 4)$, $(3, 7)$, $(4, 7)$ &$(1, 2, 3)$, $(1, 2, 4)$, $(1, 2, 5)$, $(1, 2, 7)$, $(3, 4, 5)$, $(3, 4, 6)$, $(5, 6, 7)$ & \\ \hline

$1$, $2$, $4$ &$(1, 2)$, $(1, 3)$, $(1, 4)$, $(1, 6)$, $(2, 1)$, $(2, 3)$, $(2, 4)$, $(2, 6)$, $(3, 1)$, $(3, 2)$, $(3, 4)$, $(3, 7)$, $(4, 1)$, $(4, 2)$, $(4, 7)$ &$(1, 2, 3)$, $(1, 2, 4)$, $(1, 2, 5)$, $(1, 2, 7)$, $(1, 3, 4)$, $(2, 3, 4)$, $(3, 4, 5)$, $(3, 4, 6)$, $(5, 6, 7)$ & \\ \hline

$2$, $3$, $4$ &$(1, 2)$, $(1, 3)$, $(1, 4)$, $(1, 5)$, $(1, 6)$, $(2, 3)$, $(2, 4)$, $(2, 5)$, $(2, 6)$, $(3, 2)$, $(3, 4)$, $(4, 2)$, $(4, 3)$, $(5, 3)$, $(5, 4)$, $(6, 3)$, $(6, 4)$ &$(1, 2, 3)$, $(1, 2, 4)$, $(1, 2, 5)$, $(1, 2, 6)$, $(1, 2, 7)$, $(1, 3, 4)$, $(1, 3, 5)$, $(1, 4, 6)$, $(2, 3, 6)$, $(2, 4, 5)$, $(3, 4, 7)$, $(3, 5, 6)$, $(4, 5, 6)$, $(5, 6, 7)$ & \\ \hline

$3$, $4$, $5$, $6$ &$(1, 3)$, $(1, 4)$, $(1, 5)$, $(1, 6)$, $(2, 3)$, $(2, 4)$, $(2, 5)$, $(2, 6)$, $(3, 7)$, $(4, 7)$, $(5, 7)$, $(6, 7)$ &$(1, 2, 3)$, $(1, 2, 4)$, $(1, 2, 5)$, $(1, 2, 6)$, $(1, 2, 7)$, $(3, 4, 5)$, $(3, 4, 6)$, $(3, 5, 6)$, $(4, 5, 6)$ & \\ \hline

$1$, $2$, $3$, $4$ &$(1, 2)$, $(1, 3)$, $(1, 4)$, $(2, 1)$, $(2, 3)$, $(2, 4)$, $(3, 1)$, $(3, 2)$, $(3, 4)$, $(4, 1)$, $(4, 2)$, $(4, 3)$ &$(1, 2, 3)$, $(1, 2, 4)$, $(1, 2, 5)$, $(1, 3, 4)$, $(1, 3, 6)$, $(1, 4, 7)$, $(2, 3, 4)$, $(2, 3, 7)$, $(2, 4, 6)$, $(3, 4, 5)$, $(5, 6, 7)$ & \\ \hline

$1$, $2$, $4$, $6$ &$(1, 2)$, $(1, 3)$, $(1, 4)$, $(1, 5)$, $(1, 6)$, $(2, 1)$, $(2, 3)$, $(2, 4)$, $(2, 5)$, $(2, 6)$, $(3, 1)$, $(3, 2)$, $(3, 4)$, $(3, 6)$, $(4, 1)$, $(4, 2)$, $(4, 6)$, $(5, 4)$, $(5, 6)$, $(6, 4)$ &$(1, 2, 3)$, $(1, 2, 4)$, $(1, 2, 5)$, $(1, 2, 6)$, $(1, 2, 7)$, $(1, 3, 4)$, $(1, 3, 5)$, $(1, 4, 6)$, $(2, 3, 4)$, $(2, 3, 6)$, $(2, 4, 5)$, $(3, 4, 5)$, $(3, 4, 7)$, $(3, 5, 6)$, $(5, 6, 7)$ & \\ \hline

$1$, $2$, $3$, $4$, $6$ &$(1, 2)$, $(1, 3)$, $(1, 4)$, $(1, 6)$, $(2, 1)$, $(2, 3)$, $(2, 4)$, $(2, 6)$, $(3, 4)$, $(4, 3)$, $(5, 6)$ &$(1, 2, 3)$, $(1, 2, 4)$, $(1, 2, 5)$, $(1, 2, 7)$, $(1, 3, 5)$, $(1, 4, 6)$, $(2, 3, 6)$, $(2, 4, 5)$, $(3, 4, 7)$, $(5, 6, 7)$ & $\Fib \boxtimes \PSU(2)_6$ \cite{RSW,B1}\\ \hline

\end{longtable}
}

\begin{longtable}{|c| M | M | c|}
\caption{Rank 8 Fusion Rings\label{T:Rank8}}\\ 
\hline
Loops & Arcs & Hyperedges & Grothendick Class \\ \hline 
\endfirsthead 
\caption{Rank 8 Fusion Rings (continued)}\\ \hline 
Loops & Arcs & Hyperedges & Grothendick Class \\ \hline 
\endhead 
 &  & $(1, 2, 3)$, $(1, 4, 5)$, $(1, 6, 7)$, $(2, 4, 6)$, $(2, 5, 7)$, $(3, 4, 7)$, $(3, 5, 6)$ & $\Sem^{3}$\cite{RSW}\\ \hline 

 & $(1, 6)$, $(1, 7)$, $(2, 6)$, $(2, 7)$, $(5, 1)$, $(5, 7)$, $(6, 1)$, $(6, 7)$ & $(1, 2, 3)$, $(1, 2, 4)$, $(1, 2, 5)$, $(2, 5, 6)$, $(3, 4, 7)$, $(3, 5, 6)$, $(4, 5, 6)$ & $\Sem\boxtimes \PSO(5)_6$ \cite{RSW,B1} \\ \hline 

 & $(1, 4)$, $(1, 7)$, $(2, 6)$, $(2, 7)$, $(3, 5)$, $(3, 7)$, $(4, 2)$, $(4, 7)$, $(5, 1)$, $(5, 7)$, $(6, 3)$, $(6, 7)$ & $(1, 2, 3)$, $(1, 2, 6)$, $(1, 3, 4)$, $(1, 5, 6)$, $(2, 3, 5)$, $(2, 4, 5)$, $(3, 4, 6)$, $(4, 5, 6)$ & \\ \hline 

 & $(1, 4)$, $(1, 5)$, $(1, 6)$, $(1, 7)$, $(2, 4)$, $(2, 5)$, $(2, 6)$, $(2, 7)$, $(4, 1)$, $(4, 6)$, $(4, 7)$, $(5, 1)$, $(5, 6)$, $(5, 7)$, $(6, 3)$, $(6, 7)$, $(7, 3)$, $(7, 6)$ & $(1, 2, 3)$, $(1, 2, 4)$, $(1, 2, 5)$, $(1, 2, 6)$, $(1, 2, 7)$, $(2, 4, 5)$, $(3, 4, 5)$, $(4, 5, 6)$, $(4, 5, 7)$ & \\ \hline 

$7$ & $(1, 7)$, $(2, 7)$, $(3, 7)$ & $(1, 2, 3)$, $(1, 2, 4)$, $(1, 3, 5)$, $(1, 6, 7)$, $(2, 3, 6)$, $(2, 5, 7)$, $(3, 4, 7)$, $(4, 5, 6)$ & $\Fib\boxtimes\Sem^{2}$\cite{RSW} \\ \hline 

$7$ & $(1, 5)$, $(1, 6)$, $(1, 7)$, $(2, 5)$, $(2, 6)$, $(2, 7)$, $(6, 3)$, $(6, 7)$, $(7, 3)$ & $(1, 2, 3)$, $(1, 2, 4)$, $(1, 2, 6)$, $(1, 2, 7)$, $(3, 4, 5)$, $(4, 6, 7)$, $(5, 6, 7)$ & \\ \hline 

$4$ & $(2, 4)$, $(3, 1)$, $(3, 5)$, $(5, 1)$, $(5, 3)$, $(6, 1)$, $(6, 2)$, $(6, 3)$, $(6, 4)$, $(6, 7)$, $(7, 1)$, $(7, 2)$, $(7, 4)$, $(7, 5)$, $(7, 6)$ & $(1, 2, 4)$, $(2, 3, 7)$, $(2, 5, 6)$, $(3, 4, 7)$, $(3, 6, 7)$, $(4, 5, 6)$, $(5, 6, 7)$ & $\Fib \boxtimes \PSO(5)_6$\cite{RSW,B1} \\ \hline 

$5$ & $(1, 4)$, $(1, 5)$, $(1, 6)$, $(1, 7)$, $(2, 4)$, $(2, 5)$, $(2, 6)$, $(2, 7)$, $(4, 3)$, $(4, 6)$, $(5, 3)$, $(6, 3)$, $(6, 7)$, $(7, 3)$, $(7, 4)$ & $(1, 2, 3)$, $(1, 2, 4)$, $(1, 2, 5)$, $(1, 2, 6)$, $(1, 2, 7)$, $(4, 5, 6)$, $(4, 5, 7)$, $(5, 6, 7)$ & \\ \hline 

$6$, $7$ & $(1, 6)$, $(1, 7)$, $(2, 6)$, $(2, 7)$, $(6, 7)$, $(7, 6)$ & $(1, 2, 3)$, $(1, 2, 6)$, $(1, 2, 7)$, $(1, 4, 6)$, $(1, 5, 7)$, $(2, 4, 7)$, $(2, 5, 6)$, $(3, 4, 5)$, $(3, 6, 7)$ & $\Sem\boxtimes\PSU(2)_{6}$\cite{RSW,B1}\\ \hline 

$6$, $7$ & $(1, 4)$, $(1, 5)$, $(1, 6)$, $(1, 7)$, $(5, 2)$, $(5, 6)$, $(6, 2)$, $(7, 4)$, $(7, 5)$, $(7, 6)$ & $(1, 2, 7)$, $(1, 3, 7)$, $(1, 5, 7)$, $(1, 6, 7)$, $(2, 3, 4)$, $(3, 5, 6)$, $(4, 5, 6)$ & \\ \hline 

$6$, $7$ & $(1, 5)$, $(1, 6)$, $(1, 7)$, $(2, 6)$, $(2, 7)$, $(3, 7)$, $(5, 7)$, $(6, 5)$, $(6, 7)$, $(7, 6)$ & $(1, 2, 5)$, $(1, 2, 6)$, $(1, 2, 7)$, $(1, 3, 6)$, $(1, 3, 7)$, $(1, 4, 6)$, $(2, 3, 5)$, $(2, 3, 6)$, $(2, 4, 7)$, $(3, 4, 5)$, $(5, 6, 7)$ & $\Sem\boxtimes\PSU(2)_{5}$\cite{RSW,B1} \\ \hline 

$5$, $7$ & $(1, 4)$, $(1, 5)$, $(1, 6)$, $(1, 7)$, $(4, 1)$, $(4, 5)$, $(4, 6)$, $(5, 1)$, $(5, 6)$, $(7, 4)$, $(7, 5)$, $(7, 6)$ & $(1, 2, 7)$, $(1, 3, 7)$, $(1, 4, 7)$, $(1, 5, 7)$, $(2, 3, 6)$, $(2, 4, 5)$, $(3, 4, 5)$, $(4, 5, 7)$ & \\ \hline 

$4$, $7$ & $(1, 3)$, $(1, 4)$, $(1, 5)$, $(1, 6)$, $(1, 7)$, $(3, 2)$, $(3, 5)$, $(4, 2)$, $(5, 2)$, $(5, 6)$, $(6, 2)$, $(6, 3)$, $(7, 3)$, $(7, 4)$, $(7, 5)$, $(7, 6)$ & $(1, 2, 7)$, $(1, 3, 7)$, $(1, 4, 7)$, $(1, 5, 7)$, $(1, 6, 7)$, $(3, 4, 5)$, $(3, 4, 6)$, $(4, 5, 6)$ & \\ \hline 

$5$, $6$, $7$ & $(1, 5)$, $(1, 6)$, $(1, 7)$, $(2, 5)$, $(3, 6)$, $(7, 5)$, $(7, 6)$ & $(1, 2, 6)$, $(1, 2, 7)$, $(1, 3, 5)$, $(1, 3, 7)$, $(1, 4, 7)$, $(2, 3, 7)$, $(2, 4, 5)$, $(3, 4, 6)$, $(5, 6, 7)$ & $\Fib^2\boxtimes\Sem$\cite{RSW} \\ \hline 

$1$, $5$, $6$ & $(1, 5)$, $(1, 6)$, $(5, 1)$, $(5, 6)$, $(6, 1)$, $(6, 5)$, $(7, 1)$, $(7, 5)$, $(7, 6)$ & $(1, 2, 5)$, $(1, 3, 6)$, $(1, 4, 7)$, $(1, 5, 7)$, $(1, 6, 7)$, $(2, 3, 4)$, $(2, 6, 7)$, $(3, 5, 7)$, $(4, 5, 6)$, $(5, 6, 7)$ & \\ \hline 

$1$, $6$, $7$ & $(1, 4)$, $(1, 5)$, $(1, 6)$, $(1, 7)$, $(5, 2)$, $(5, 6)$, $(6, 2)$, $(7, 1)$, $(7, 4)$, $(7, 5)$, $(7, 6)$ & $(1, 2, 7)$, $(1, 3, 7)$, $(1, 5, 7)$, $(1, 6, 7)$, $(2, 3, 4)$, $(3, 5, 6)$, $(4, 5, 6)$ & \\ \hline 

$1$, $6$, $7$ & $(1, 4)$, $(1, 5)$, $(1, 6)$, $(1, 7)$, $(5, 1)$, $(5, 2)$, $(5, 6)$, $(5, 7)$, $(6, 1)$, $(6, 2)$, $(6, 7)$, $(7, 1)$, $(7, 4)$, $(7, 5)$, $(7, 6)$ & $(1, 2, 7)$, $(1, 3, 7)$, $(1, 5, 6)$, $(1, 5, 7)$, $(1, 6, 7)$, $(2, 3, 4)$, $(3, 5, 6)$, $(4, 5, 6)$, $(5, 6, 7)$ & \\ \hline 

$1$, $4$, $7$ & $(1, 3)$, $(1, 4)$, $(1, 5)$, $(1, 6)$, $(1, 7)$, $(3, 2)$, $(3, 5)$, $(4, 2)$, $(5, 2)$, $(5, 6)$, $(6, 2)$, $(6, 3)$, $(7, 1)$, $(7, 3)$, $(7, 4)$, $(7, 5)$, $(7, 6)$ & $(1, 2, 7)$, $(1, 3, 7)$, $(1, 4, 7)$, $(1, 5, 7)$, $(1, 6, 7)$, $(3, 4, 5)$, $(3, 4, 6)$, $(4, 5, 6)$ & \\ \hline 

$1$, $4$, $7$ & $(1, 4)$, $(1, 7)$, $(3, 1)$, $(3, 2)$, $(3, 5)$, $(3, 7)$, $(4, 2)$, $(5, 1)$, $(5, 2)$, $(5, 6)$, $(5, 7)$, $(6, 1)$, $(6, 2)$, $(6, 3)$, $(6, 7)$, $(7, 1)$, $(7, 4)$ & $(1, 2, 7)$, $(1, 3, 5)$, $(1, 3, 6)$, $(1, 4, 7)$, $(1, 5, 6)$, $(3, 4, 5)$, $(3, 4, 6)$, $(3, 5, 7)$, $(3, 6, 7)$, $(4, 5, 6)$, $(5, 6, 7)$ & \\ \hline 

$4$, $5$, $6$ & $(1, 4)$, $(1, 5)$, $(2, 4)$, $(2, 5)$, $(4, 5)$, $(4, 6)$, $(5, 4)$, $(5, 6)$, $(6, 1)$, $(6, 2)$, $(6, 4)$, $(6, 5)$, $(7, 1)$, $(7, 2)$, $(7, 4)$, $(7, 5)$, $(7, 6)$ & $(1, 2, 3)$, $(1, 2, 4)$, $(1, 2, 5)$, $(1, 4, 6)$, $(1, 5, 7)$, $(1, 6, 7)$, $(2, 4, 7)$, $(2, 5, 6)$, $(2, 6, 7)$, $(3, 4, 5)$, $(3, 6, 7)$, $(4, 5, 7)$, $(4, 6, 7)$, $(5, 6, 7)$ & \\ \hline 

$1$, $4$, $7$ & $(1, 4)$, $(1, 7)$, $(3, 1)$, $(3, 2)$, $(3, 5)$, $(3, 6)$, $(3, 7)$, $(4, 2)$, $(5, 1)$, $(5, 2)$, $(5, 3)$, $(5, 6)$, $(5, 7)$, $(6, 1)$, $(6, 2)$, $(6, 3)$, $(6, 5)$, $(6, 7)$, $(7, 1)$, $(7, 4)$ & $(1, 2, 7)$, $(1, 3, 5)$, $(1, 3, 6)$, $(1, 4, 7)$, $(1, 5, 6)$, $(3, 4, 5)$, $(3, 4, 6)$, $(3, 5, 7)$, $(3, 6, 7)$, $(4, 5, 6)$, $(5, 6, 7)$ & \\ \hline 

$1$, $3$, $5$ & $(1, 3)$, $(1, 4)$, $(1, 5)$, $(1, 6)$, $(3, 1)$, $(3, 4)$, $(3, 5)$, $(3, 7)$, $(4, 1)$, $(4, 3)$, $(4, 5)$, $(5, 1)$, $(5, 3)$, $(6, 1)$, $(6, 3)$, $(6, 4)$, $(6, 5)$, $(6, 7)$, $(7, 1)$, $(7, 3)$, $(7, 4)$, $(7, 5)$, $(7, 6)$ & $(1, 2, 7)$, $(1, 3, 5)$, $(1, 3, 6)$, $(1, 3, 7)$, $(1, 4, 6)$, $(1, 4, 7)$, $(1, 5, 7)$, $(1, 6, 7)$, $(2, 3, 6)$, $(2, 4, 5)$, $(3, 4, 6)$, $(3, 4, 7)$, $(3, 5, 6)$, $(3, 6, 7)$, $(4, 5, 6)$, $(4, 5, 7)$, $(5, 6, 7)$ & \\ \hline 

$1$, $4$, $7$ & $(1, 3)$, $(1, 4)$, $(1, 5)$, $(1, 6)$, $(1, 7)$, $(3, 1)$, $(3, 2)$, $(3, 5)$, $(3, 6)$, $(3, 7)$, $(4, 2)$, $(5, 1)$, $(5, 2)$, $(5, 3)$, $(5, 6)$, $(5, 7)$, $(6, 1)$, $(6, 2)$, $(6, 3)$, $(6, 5)$, $(6, 7)$, $(7, 1)$, $(7, 3)$, $(7, 4)$, $(7, 5)$, $(7, 6)$ & $(1, 2, 7)$, $(1, 3, 5)$, $(1, 3, 6)$, $(1, 3, 7)$, $(1, 4, 7)$, $(1, 5, 6)$, $(1, 5, 7)$, $(1, 6, 7)$, $(3, 4, 5)$, $(3, 4, 6)$, $(3, 5, 7)$, $(3, 6, 7)$, $(4, 5, 6)$, $(5, 6, 7)$ & \\ \hline 

$1$, $2$, $3$, $4$ & $(1, 5)$, $(2, 5)$, $(3, 5)$, $(4, 5)$, $(6, 1)$, $(6, 2)$, $(6, 3)$, $(6, 4)$, $(7, 1)$, $(7, 2)$, $(7, 3)$, $(7, 4)$ & $(1, 2, 3)$, $(1, 2, 4)$, $(1, 3, 4)$, $(1, 6, 7)$, $(2, 3, 4)$, $(2, 6, 7)$, $(3, 6, 7)$, $(4, 6, 7)$, $(5, 6, 7)$ & \\ \hline 

$1$, $5$, $6$, $7$ & $(1, 5)$, $(1, 6)$, $(1, 7)$, $(5, 1)$, $(5, 6)$, $(5, 7)$, $(6, 1)$, $(6, 5)$, $(6, 7)$, $(7, 1)$, $(7, 5)$, $(7, 6)$ & $(1, 2, 5)$, $(1, 3, 6)$, $(1, 4, 7)$, $(1, 5, 6)$, $(1, 5, 7)$, $(1, 6, 7)$, $(2, 3, 4)$, $(2, 6, 7)$, $(3, 5, 7)$, $(4, 5, 6)$, $(5, 6, 7)$ & \\ \hline 

$4$, $5$, $6$, $7$ & $(1, 4)$, $(1, 5)$, $(1, 6)$, $(1, 7)$, $(2, 6)$, $(2, 7)$, $(4, 1)$, $(4, 2)$, $(4, 5)$, $(4, 6)$, $(4, 7)$, $(5, 1)$, $(5, 2)$, $(5, 4)$, $(5, 6)$, $(5, 7)$, $(6, 4)$, $(6, 5)$, $(6, 7)$, $(7, 6)$ & $(1, 2, 4)$, $(1, 2, 6)$, $(1, 2, 7)$, $(1, 3, 6)$, $(1, 4, 5)$, $(1, 4, 6)$, $(1, 5, 6)$, $(1, 5, 7)$, $(2, 3, 7)$, $(2, 4, 5)$, $(2, 5, 6)$, $(3, 4, 5)$, $(4, 5, 6)$, $(4, 5, 7)$, $(4, 6, 7)$ & \\ \hline 

$1$, $2$, $3$, $5$ & $(1, 3)$, $(1, 4)$, $(1, 5)$, $(1, 6)$, $(1, 7)$, $(2, 4)$, $(2, 6)$, $(3, 1)$, $(3, 2)$, $(3, 5)$, $(3, 6)$, $(3, 7)$, $(5, 1)$, $(5, 2)$, $(5, 3)$, $(5, 6)$, $(5, 7)$, $(6, 1)$, $(6, 2)$, $(6, 4)$, $(6, 7)$, $(7, 1)$, $(7, 2)$, $(7, 3)$, $(7, 4)$, $(7, 5)$ & $(1, 2, 3)$, $(1, 2, 5)$, $(1, 2, 7)$, $(1, 3, 5)$, $(1, 3, 6)$, $(1, 3, 7)$, $(1, 5, 6)$, $(1, 5, 7)$, $(2, 3, 5)$, $(2, 6, 7)$, $(3, 4, 5)$, $(3, 5, 6)$, $(3, 5, 7)$, $(3, 6, 7)$, $(5, 6, 7)$ & \\ \hline 

$1$, $2$, $4$, $6$, $7$ & $(1, 4)$, $(4, 1)$, $(5, 2)$, $(6, 1)$, $(6, 2)$, $(6, 4)$, $(6, 7)$, $(7, 1)$, $(7, 2)$, $(7, 4)$, $(7, 6)$ & $(1, 2, 6)$, $(1, 3, 4)$, $(1, 5, 7)$, $(1, 6, 7)$, $(2, 3, 5)$, $(2, 4, 7)$, $(3, 6, 7)$, $(4, 5, 6)$, $(4, 6, 7)$, $(5, 6, 7)$ & $\Fib \boxtimes \PSU(2)_6$ \cite{RSW,B1} \\ \hline 

$1$, $2$, $3$, $4$, $7$ & $(1, 6)$, $(2, 6)$, $(3, 6)$, $(4, 6)$, $(5, 1)$, $(5, 2)$, $(5, 3)$, $(5, 4)$, $(5, 7)$, $(7, 1)$, $(7, 2)$, $(7, 3)$, $(7, 4)$ & $(1, 2, 3)$, $(1, 2, 4)$, $(1, 3, 4)$, $(1, 5, 7)$, $(2, 3, 4)$, $(2, 5, 7)$, $(3, 5, 7)$, $(4, 5, 7)$, $(5, 6, 7)$ & \\ \hline 

$1$, $4$, $5$, $6$, $7$ & $(1, 3)$, $(1, 5)$, $(2, 4)$, $(2, 5)$, $(2, 6)$, $(3, 5)$, $(5, 1)$, $(6, 1)$, $(6, 4)$, $(6, 5)$, $(6, 7)$, $(7, 1)$, $(7, 2)$, $(7, 3)$, $(7, 4)$, $(7, 5)$, $(7, 6)$ & $(1, 2, 6)$, $(1, 2, 7)$, $(1, 3, 5)$, $(1, 4, 7)$, $(1, 6, 7)$, $(2, 3, 4)$, $(2, 3, 6)$, $(2, 5, 7)$, $(2, 6, 7)$, $(3, 6, 7)$, $(4, 5, 6)$, $(5, 6, 7)$ & $\Fib \boxtimes \PSU(2)_5$\cite{RSW,B1}\\ \hline 

$1$, $4$, $5$, $6$, $7$ & $(1, 4)$, $(1, 5)$, $(1, 6)$, $(1, 7)$, $(2, 4)$, $(2, 5)$, $(2, 7)$, $(3, 7)$, $(4, 1)$, $(4, 2)$, $(4, 5)$, $(4, 6)$, $(4, 7)$, $(5, 4)$, $(5, 6)$, $(5, 7)$, $(6, 1)$, $(6, 2)$, $(6, 3)$, $(6, 4)$, $(6, 5)$, $(6, 7)$, $(7, 5)$ & $(1, 2, 4)$, $(1, 2, 5)$, $(1, 2, 6)$, $(1, 2, 7)$, $(1, 3, 4)$, $(1, 3, 5)$, $(1, 4, 5)$, $(1, 4, 6)$, $(1, 5, 6)$, $(1, 6, 7)$, $(2, 3, 5)$, $(2, 3, 7)$, $(2, 4, 6)$, $(2, 5, 6)$, $(3, 4, 6)$, $(4, 5, 6)$, $(4, 5, 7)$, $(4, 6, 7)$ & \\ \hline 

$1$, $2$, $3$, $4$, $5$, $7$ & $(1, 6)$, $(2, 6)$, $(3, 6)$, $(4, 6)$, $(5, 1)$, $(5, 2)$, $(5, 3)$, $(5, 4)$, $(5, 7)$, $(7, 1)$, $(7, 2)$, $(7, 3)$, $(7, 4)$, $(7, 5)$ & $(1, 2, 3)$, $(1, 2, 4)$, $(1, 3, 4)$, $(1, 5, 7)$, $(2, 3, 4)$, $(2, 5, 7)$, $(3, 5, 7)$, $(4, 5, 7)$, $(5, 6, 7)$ & \\ \hline 

$1$, $2$, $3$, $4$, $5$, $7$ & $(1, 5)$, $(1, 6)$, $(1, 7)$, $(2, 5)$, $(2, 6)$, $(2, 7)$, $(3, 5)$, $(3, 6)$, $(3, 7)$, $(4, 6)$, $(5, 4)$, $(5, 7)$, $(7, 4)$, $(7, 5)$ & $(1, 2, 3)$, $(1, 2, 4)$, $(1, 2, 5)$, $(1, 2, 7)$, $(1, 3, 4)$, $(1, 3, 5)$, $(1, 3, 7)$, $(2, 3, 4)$, $(2, 3, 5)$, $(2, 3, 7)$, $(4, 5, 7)$, $(5, 6, 7)$ & \\ \hline 

$1$, $3$, $4$, $5$, $6$, $7$ & $(1, 4)$, $(1, 7)$, $(3, 1)$, $(3, 2)$, $(3, 5)$, $(3, 7)$, $(4, 2)$, $(5, 1)$, $(5, 2)$, $(5, 6)$, $(5, 7)$, $(6, 1)$, $(6, 2)$, $(6, 3)$, $(6, 7)$, $(7, 1)$, $(7, 4)$ & $(1, 2, 7)$, $(1, 3, 5)$, $(1, 3, 6)$, $(1, 4, 7)$, $(1, 5, 6)$, $(3, 4, 5)$, $(3, 4, 6)$, $(3, 5, 6)$, $(3, 5, 7)$, $(3, 6, 7)$, $(4, 5, 6)$, $(5, 6, 7)$ & \\ \hline 

$1$, $2$, $4$, $5$, $6$, $7$ & $(1, 4)$, $(1, 5)$, $(1, 6)$, $(2, 4)$, $(2, 5)$, $(2, 6)$, $(4, 1)$, $(4, 2)$, $(4, 5)$, $(4, 6)$, $(4, 7)$, $(5, 1)$, $(5, 4)$, $(5, 6)$, $(6, 2)$, $(6, 4)$, $(6, 5)$, $(7, 1)$, $(7, 2)$, $(7, 4)$, $(7, 5)$, $(7, 6)$ & $(1, 2, 4)$, $(1, 2, 5)$, $(1, 2, 6)$, $(1, 2, 7)$, $(1, 3, 5)$, $(1, 4, 6)$, $(1, 4, 7)$, $(1, 5, 7)$, $(1, 6, 7)$, $(2, 3, 6)$, $(2, 4, 5)$, $(2, 4, 7)$, $(2, 5, 7)$, $(2, 6, 7)$, $(3, 4, 7)$, $(4, 5, 6)$, $(4, 5, 7)$, $(4, 6, 7)$, $(5, 6, 7)$ & \\ \hline 

$1$, $3$, $4$, $5$, $6$, $7$ & $(1, 3)$, $(1, 4)$, $(1, 5)$, $(1, 6)$, $(1, 7)$, $(3, 1)$, $(3, 2)$, $(3, 5)$, $(3, 7)$, $(4, 2)$, $(5, 1)$, $(5, 2)$, $(5, 6)$, $(5, 7)$, $(6, 1)$, $(6, 2)$, $(6, 3)$, $(6, 7)$, $(7, 1)$, $(7, 3)$, $(7, 4)$, $(7, 5)$, $(7, 6)$ & $(1, 2, 7)$, $(1, 3, 5)$, $(1, 3, 6)$, $(1, 3, 7)$, $(1, 4, 7)$, $(1, 5, 6)$, $(1, 5, 7)$, $(1, 6, 7)$, $(3, 4, 5)$, $(3, 4, 6)$, $(3, 5, 6)$, $(3, 5, 7)$, $(3, 6, 7)$, $(4, 5, 6)$, $(5, 6, 7)$ & \\ \hline 

$1$, $2$, $3$, $4$, $5$, $6$, $7$ & $(1, 4)$, $(1, 5)$, $(2, 4)$, $(2, 6)$, $(3, 5)$, $(3, 6)$, $(7, 1)$, $(7, 2)$, $(7, 3)$, $(7, 4)$, $(7, 5)$, $(7, 6)$ & $(1, 2, 3)$, $(1, 2, 7)$, $(1, 3, 7)$, $(1, 4, 5)$, $(1, 6, 7)$, $(2, 3, 7)$, $(2, 4, 6)$, $(2, 5, 7)$, $(3, 4, 7)$, $(3, 5, 6)$ & $\Fib^3$ \cite{RSW} \\ \hline 

$1$, $2$, $3$, $4$, $5$, $6$, $7$ & $(1, 2)$, $(1, 3)$, $(1, 4)$, $(1, 5)$, $(2, 1)$, $(2, 3)$, $(2, 4)$, $(2, 6)$, $(3, 1)$, $(3, 2)$, $(3, 5)$, $(3, 6)$, $(4, 1)$, $(4, 3)$, $(4, 5)$, $(4, 6)$, $(5, 2)$, $(5, 3)$, $(5, 4)$, $(5, 6)$, $(6, 1)$, $(6, 2)$, $(6, 4)$, $(6, 5)$, $(7, 1)$, $(7, 2)$, $(7, 3)$, $(7, 4)$, $(7, 5)$, $(7, 6)$ & $(1, 2, 4)$, $(1, 2, 5)$, $(1, 2, 7)$, $(1, 3, 5)$, $(1, 3, 6)$, $(1, 3, 7)$, $(1, 4, 6)$, $(1, 4, 7)$, $(1, 5, 6)$, $(1, 5, 7)$, $(1, 6, 7)$, $(2, 3, 4)$, $(2, 3, 6)$, $(2, 3, 7)$, $(2, 4, 5)$, $(2, 4, 7)$, $(2, 5, 6)$, $(2, 5, 7)$, $(2, 6, 7)$, $(3, 4, 5)$, $(3, 4, 6)$, $(3, 4, 7)$, $(3, 5, 7)$, $(3, 6, 7)$, $(4, 5, 7)$, $(4, 6, 7)$, $(5, 6, 7)$ & \\ \hline 

\end{longtable}

\bibliographystyle{siam}
\bibliography{fusion}

\end{document}